\theoremstyle{plain}
\newtheorem{theorem}{Theorem}
\newtheorem{lemma}{Lemma}
\newtheorem{proposition}{Proposition}
\theoremstyle{definition}
\newtheorem{definition}{Definition}
\newtheorem{remark}{Remark}
\theoremstyle{plain}
\theoremstyle{definition}
\newtoks{\thehRemark}
\newtheorem*{Remark}{\the\thehRemark}
\renewcommand{\leq}{\leqslant}
\renewcommand{\geq}{\geqslant}
\newcommand{\Rz}{\mathbb{R}}
\begin{document}

\title[Squared distance function on the configuration space of  a planar spider ]{Squared distance function on the configuration space of  a planar spider  with applications to Hooke energy and Voronoi distance}

\author{Maciej Denkowski, Gaiane Panina, Dirk Siersma}

\address{M. Denkowski: Jagiellonian University, Faculty of Mathematics and Computer Science,
Institute of Mathematics, Krak\'ow, Poland, maciej.denkowski@uj.edu.pl; G. Panina: St. Petersburg department of Steklov institute of mathematics, Russian Federation,
 gaiane-panina@rambler.ru; D. Siersma: Mathematisch Instituut, Universiteit Utrecht,  The Netherlands, d.siersma@uu.nl }



\subjclass[2020]{58K05, 70B15}

\keywords{Spider linkage, work space, critical points, Morse-Bott theory, Hooke energy, Voronoi distance}

\begin{abstract}
Spider mechanisms  are the simplest examples of arachnoid mechanisms, they are one step more complicated than polygonal linkages. Their configuration spaces have been studied intensively,
but are yet not  completely understood.
In the paper we study them using the Morse theory of the squared distance function from the "body" of the spider to some fixed point in the plane.
Generically, it is a Morse-Bott function.
We list its critical manifolds, describe them as products of polygon spaces, and derive a formula for their Morse-Bott indices.
We apply the obtained results  to Hooke energy and Voronoi distance.
\end{abstract}

\maketitle
\section{Introduction}

It has always been fruitful to study the configuration space of a  mechanical linkage using Morse or Morse-Bott theory. As an example, we would like to remind the reader that the homology groups of a polygon space in $\mathbb{R}^3$  were computed by treating the squared length of a diagonal of a flexible polygon as a Morse function in \cite{HK}. Another example:
the smoothness of polygon spaces follows from the behaviour of another squared distance function defined on the configuration space of a robot arm.

In this paper we study  a new class of mechanical linkages playing an important and increasing role in applied problems, that is,  \textit{spider mechanisms}.
At the present time the topology and control of multi-polygonal and spider mechanisms is a largely open topic. Some information about their configuration spaces can be derived from a paper of Mounoud \cite{Mou} and the results of Blanc, Shvalb, Shoram \cite{SSB,SSB2},  O'Hara \cite{Oh}, Kamiyama and Tsukada \cite{KT}.

We study planar spider mechanisms using the Morse theory of geometrically and physically motivated functions defined on the configuration space.  A planar spider consists of a  point (the \textit{body}) together with a set of \textit{legs} connected to a set of points (the \textit{feet}) which are fixed in the plane (Figure  \ref{f:spider}). Although studied intensively, their configuration spaces are not yet  completely understood.We focus on three  (mutually related) functions: the squared distance function from the body of the spider to some fixed point, the Hook energy, and the Voronoi distance function. 

\medskip

Here is the structure and the brief content of the paper.

In Section \ref{s:basic} we remind the reader basic notions related to  spider mechanisms. We define  the \textit{configuration space}, or the\textit{ spider space} of a mechanism. Its elements are all possible shapes of the given mechanism. Generically, it is a smooth manifold.
The simplest examples of spiders are \textit{robot arms}, that is, one-legged spiders, and \textit{polygonal linkages}  (two-legged spiders)  are revised.

In Section \ref{s:sqmorse} we  equip the spider space with the squared distance function from the body $X$ of the spider to some fixed point $Z$ in the plane.
Generically, it is a Morse-Bott function  on the configuration space. 
We characterize the stationary configurations, that is, we list   its critical manifolds, show, in Theorem \ref{t:critical}, that  they are equal to products of polygon spaces and derive a formula for their Morse-Bott indices   in Theorem \ref{t:indices}.  For a generic spider the stationary configurations of the squared distance  are geometrically  given by one of the following conditions: (i) the   body $X$ and the point $Z$ coincide, (ii) one leg is aligned, and the point $Z$ lies on this line, (iii) two different legs are aligned. These positions occur as critical submanifolds in the spider space (especially if the legs have more than two edges). We describe them as products of polygon spaces. 

These are  our main results; next we pass to examples and applications.

In Section \ref{s:examples} We give a number of examples of spiders.  We treat `small examples', and also give examples where the squared distance is a perfect Morse-Bott function. 

In Section \ref{s:hooke}   we  replace the (geometrically motivated) squared distance function by the (physically motivated)  \textit{Hooke energy}.
Since  the Hooke potential is up to an affine transformation equal to the squared distance of the body to the center of gravity of the feet,
we have straightforwardly  Proposition \ref{p:hooke} and
Theorem \ref{t:whooke}.

In Section \ref{s:voronoi} we treat another natural potential function, the \textit{Voronoi distance}, that is, the (squared) distance from the body of the spider to
the finite set of the feet. Description of critical points (Theorem \ref{VorCrit}) is a hybrid result which combines Voronoi and Delaunay tessellations with our results from Section \ref{s:sqmorse}. 
That is, stationary spiders of the Voronoi distance fall into two types: (i) critical spiders of the squared distance to one of the feet and (ii) configurations which project to the critical points of the Morse distance function in the plane. 

Section \ref{s:cremarks}  indicates possible generalizations.

\subsection*{Acknowledgements}

We thank Giorgi Khimshiashvili for useful discussions. This work is the result of a  joint \textit{Recherche en R\'esidence} (RER)  in CIRM (Luminy). We thank CIRM for the hospitality and excellent working atmosphere.

The research cooperation was partially funded by the program Excellence Initiative - Research at the Jagiellonian University in Krak\'ow.

D. Siersma acknowledges support from the project ``Singularities and Applications'' - CF 132/31.07.2023 funded by the European Union - NextGenerationEU - through Romania's National Recovery and Resilience Plan.

\section{Basic notions: spider mechanism, its configuration space and work space. Robot arms and polygon spaces}\label{s:basic}

\subsection{Spider mechanisms and spiders}

 We will use the convention of denoting points in ${\Rz}^2$ with capital letters, while the corresponding vectors will be indicated by lower-case letters; thus $A$ is a point, while $a$ the corresponding vector, hence the notation $||a-b||=|AB|$. We start by recalling the notion of a spider ({arachnoid mechanism}) in a $2$-dimensional space.

\begin{definition}
An $n$-legged {\it spider  mechanism} in ${\Rz}^2$  consists of \begin{enumerate}
                                                                  \item a set of $n$ fixed, pairwise distinct points (the {\it feet}) $A_1,\dots, A_n\in{\Rz}^2$, 
                                                               \item a collection of natural numbers $   p_1>1,\dots, p_n>1$, and
                                                                  \item a collection of   lengths $\ell_{i,{j+1}}>0$, $j=1,\dots, p_i$.
                                                                \end{enumerate}

An $n$-legged {\it spider} with $(p_1,\dots, p_n)$-articulated legs  in ${\Rz}^2$  is a \textit{configuration} of a given spider mechanism. It consists of 

\begin{enumerate}
  \item a centre point $X\in {\Rz}^2$ (the {\it body} of the spider that will be moving around), and
  \item the set of $n$ {\it legs} $\mathcal{L}_i$, that are each a connected chain of $p_i$ {\it edges} (rigid bars),  from $A_i$ to $X$ i.e., for each $i\in \{1,\dots, n\}$, a union of segments $[A_i^j,A_i^{j+1}]\subset{\Rz}^2$  of fixed lengths $\ell_{i,{j+1}}>0$, $j=1,\dots, p_i$ where $A_i^0=A_i$, $A_i^{p_i}=X$. The points $A_i^j$ for $j=1,\dots, p_i-1$ are called the {\it joints} of the $i$-th leg.
\end{enumerate}
One imagines a spider  mechanism with fixed feet and with the legs that are allowed to rotate freely around the joints.  We also allow intersections and self-intersections of the legs.
\end{definition}

\begin{figure}[ht]

\begin{center}
\includegraphics[width=4cm]{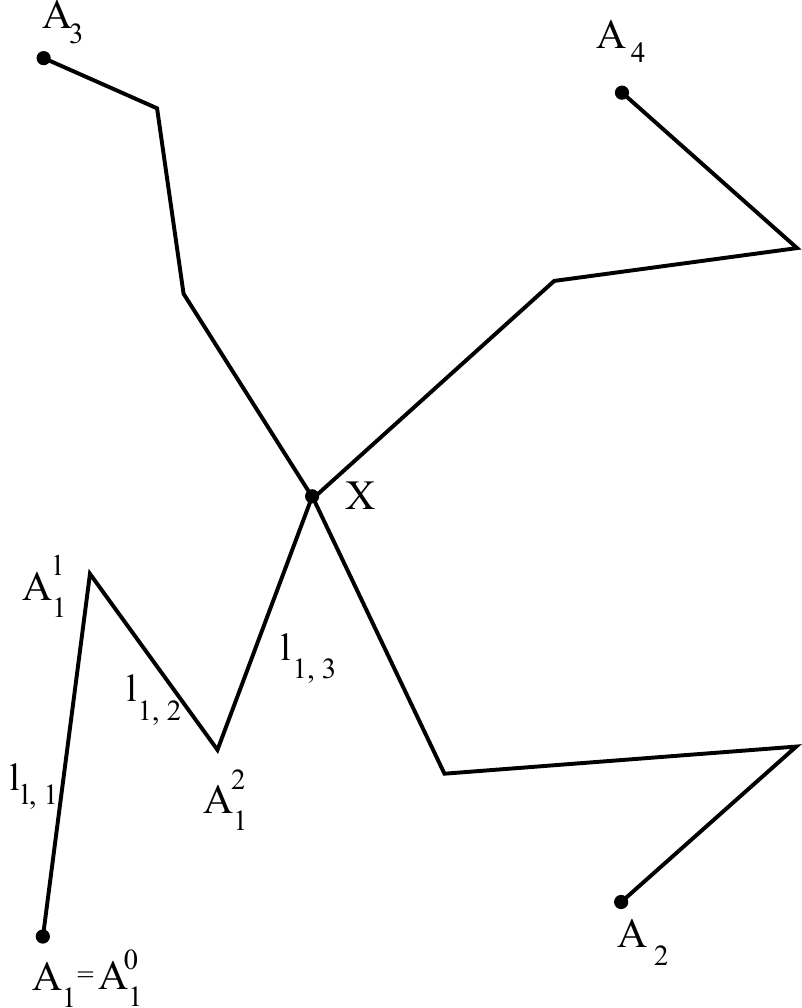} \label{f:spider}
\caption{Spider in the plane.}

\end{center}
\end{figure}
\begin{definition} The {\it configuration space} of a given spider  mechanism is the set of all the possible positions taken by its body and joints. We call it the {\it spider space} $\mathcal{S}$:
    \begin{align*}
    \mathcal{S}:=\{&(X,A_1^1,\dots,A_1^{p_1-1},A_2^1,\dots,A_n^{p_n-1})\in ({\Rz}^{2})^{1+N}\mid\\ &|A_i^jA_i^{j+1}|=\ell_{i,j+1}, i=1,\dots, n, j=0,\dots, p_i-1\},
    \end{align*}
    where $N=\sum_{i=1}^np_i-n$. It comes together with the natural projection to ${\Rz}^2$ called the {\it work map} $$\pi\colon \mathcal{S}\to \mathbb{R}^2,$$ sending a {\it spider} (i.e. a configuration) $\xi=(X,A_1,\dots, A_{N})\in \mathcal{S}$ to its body $X\in \mathcal{W}$.
The image $\mathcal{W}=\pi(\mathcal{S})$ is called the {\it work space} of the spider mechanism.

\end{definition}

The spider space and the work map have been studied in several papers  mentioned in the Introduction.

\subsection{ Robot arms: spiders with one leg}\label{ss:robotarm}
A special case is the spider with only one leg. This  mechanism is also called  a \textit{(free) robot arm}. Its underlying graph is a path graph with start point $A$ and end point $X$. It is convenient to imagine that the path graph is directed so that each edge is oriented from $A$ to $X$. The spider space of the 1-leg spider  does not depend on the edge lengths. It is  the torus $(S^1)^{p}$, where $p$ is the number of edges.  However, people usually  work with configurations of the robot arm modulo the  rotation group around  the point $A$. With this point of view the configuration space of a robot arm   is the torus $(S^1)^{p-1}$. It is convenient to imagine $(S^1)^{p-1}$ as the space of all configurations with a fixed first bar. And this is the point of view we adopt in the Proposition \ref{p:robotarm} below. 
\medskip

A configuration of a robot arm is \textit{aligned}  if it fits in a straight line.
For a generic robot arm, in an aligned configuration, each edge (geometrically) is co-directed  either with $\overrightarrow{AX}$, or with  $\overrightarrow{XA}$.
        The edges co-directed with $\overrightarrow{AX}$ are called \textit{positively directed}, the other ones  are \textit{negatively directed}.

    \begin{definition}\label{def:IndexRobArm}
       For an aligned configuration $\mathcal{L}$, denote by $Pos(\mathcal{L})$ the number of positively directed edges.
       Define also the Morse-index: $$M(\mathcal{L})=Pos(\mathcal{L})-1.$$
    \end{definition}

\begin{figure}[h]
\includegraphics[width=12cm]{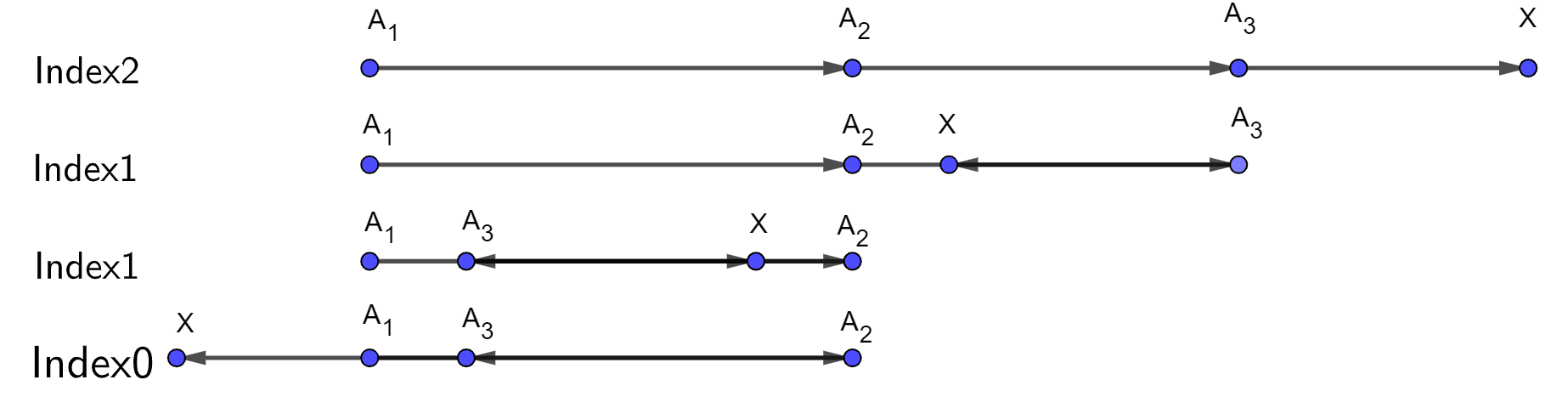}
\caption{Several values of the Morse index}
\label{fig:armindex}
\end{figure}

We are going to multiply exploit the squared distance function between the endpoints of a robot arm. In particular, we shall use the following:

\begin{proposition}\label{p:robotarm}(\cite{KM})
        The squared distance between the endpoints of a robot arm  $||a-x||^2$  (defined on the configuration space i.e. composed with the work map, cf. Section \ref{s:sqmorse}) is  a Morse-Bott function, whose critical points and critical manifolds  are:
        \begin{enumerate}
          \item isolated critical points that correspond to aligned configurations, and
          \item the set $\{\xi\in\mathcal{S}\mid ||a-\pi(\xi)||=0\}$, which exhibits the global minimum  (if it is non-empty).
        \end{enumerate}

         The Morse index of an aligned configuration  equals $M(\mathcal{L})$.
        \end{proposition}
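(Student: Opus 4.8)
The plan is to parametrize the configuration space of a $p$-edge robot arm (with the first bar fixed) as the torus $T=(S^1)^{p-1}$, where the $k$-th coordinate $\theta_k$ is the direction of the $k$-th edge. Writing the edge lengths as $\ell_1,\dots,\ell_p$ and placing $A$ at the origin, the endpoint is $x(\theta)=\sum_{k=1}^p \ell_k u(\theta_k)$ with $u(\theta)=(\cos\theta,\sin\theta)$, and the function under study is $f(\theta)=\|x(\theta)\|^2$. First I would compute the gradient: $\partial f/\partial\theta_k = 2\,\ell_k\,\langle x(\theta), u'(\theta_k)\rangle$, where $u'(\theta_k)$ is the unit vector perpendicular to the $k$-th edge. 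Hence a configuration is critical if and only if, for every $k$, either $x(\theta)=0$ or the vector $AX=x(\theta)$ is parallel to the $k$-th edge. This immediately splits the critical set into the two announced families: the locus $\{x(\theta)=0\}$ (the global minimum, where $f=0\le\|a-x\|^2$ always), and the configurations with $x(\theta)\ne 0$ in which every edge is parallel to the fixed segment $AX$ — i.e. the aligned configurations. That each aligned configuration is isolated is clear, since fixing the line through $A$ and $X$ leaves only finitely many choices of sign for each edge consistent with the prescribed edge lengths summing (with signs) to a nonzero vector.

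Next I would compute the Hessian at an aligned critical point $\mathcal{L}$. After rotating coordinates so that $AX$ points along the first axis, write $\theta_k = \varepsilon_k\cdot 0 + \varepsilon_k$-perturbation, where $\varepsilon_k=+1$ for a positively directed edge and $\varepsilon_k=-1$ for a negatively directed one; concretely set $\theta_k=0$ or $\theta_k=\pi$. A second-order Taylor expansion of $\|x(\theta)\|^2$ gives, up to a positive constant, a diagonal quadratic form in the angular increments $\delta\theta_k$ with coefficients proportional to $-\varepsilon_k\cdot L\cdot \ell_k$ in the second-derivative terms coming from the curvature of $u$, plus a positive-definite contribution from the first-order displacements squared; here $L=\|x\|\ne0$. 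The upshot is that the Hessian is nondegenerate (this is where genericity of the edge lengths is used, to exclude $L=0$ and to exclude accidental cancellations), with exactly one negative eigenvalue contributed by each positively directed edge except one — the ``except one'' arising because the overall scaling freedom / the constraint tying the $\delta\theta_k$'s together removes one dimension. This yields Morse index $=Pos(\mathcal{L})-1 = M(\mathcal{L})$.

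I expect the main obstacle to be the bookkeeping in the Hessian computation: one must correctly account for both the ``centripetal'' second-order term (from differentiating $u(\theta_k)$ twice) and the first-order spreading term (the squared transverse displacement), show their combination is nondegenerate precisely under the genericity hypothesis, and correctly see why the count of negative directions is $Pos(\mathcal{L})-1$ rather than $Pos(\mathcal{L})$ or $Neg(\mathcal{L})-1$. A clean way to organize this is to note that $\|x(\theta)\|^2$ is the composition of the work map $\theta\mapsto x(\theta)$ (whose image is the annulus/disc $\mathcal{W}$) with the convex function $\|\cdot\|^2$ on $\mathbb{R}^2$, and to use the known critical point structure of the work map of a robot arm together with the chain rule for Hessians; at an aligned configuration the differential of the work map drops rank by exactly one in a controlled way. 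Finally, for the Morse–Bott claim along $\{x(\theta)=0\}$, I would check that the normal Hessian there is positive definite: since $f\ge 0$ with equality exactly on this set, it is automatically a nondegenerate minimum in the normal directions provided the set is a manifold, which follows from the robot-arm/polygon-space smoothness results quoted earlier. Citing \cite{KM} (Kapovich–Millson), I can also simply invoke their analysis and restrict the exposition to the index formula, which is the part genuinely needed in the sequel.
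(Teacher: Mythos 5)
First, a point of comparison: the paper does not prove this proposition at all --- it is quoted from Kapovich--Millson \cite{KM} --- so your closing suggestion to simply invoke that reference already matches what the authors do. Your direct computation is nevertheless the standard route, and its first half is sound: with the first bar frozen and $x(\theta)=\sum_k\ell_k u(\theta_k)$, the identity $\partial f/\partial\theta_k=2\ell_k\langle x,u'(\theta_k)\rangle$ correctly splits the critical set into $\{x=0\}$ and the aligned configurations. The gap is in the index computation. At an aligned configuration write $\theta_k=\bar\theta_k+t_k$, $\epsilon_k=\cos\bar\theta_k\in\{\pm1\}$, $b_k=\ell_k\epsilon_k$, $L=\sum_{k=1}^p b_k$ (so $x=(L,0)$ and $|L|=\|x\|$); then
$$\tfrac12\,\mathrm{Hess}f\;=\;-L\sum_{k\ge2} b_k\,t_k^2\;+\;\Bigl(\sum_{k\ge2} b_k\,t_k\Bigr)^{2},$$
a diagonal form plus a \emph{rank-one} positive semidefinite term, not a diagonal form. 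The diagonal part has one negative entry for each positively directed edge among the free edges $2,\dots,p$; whether that count is $Pos(\mathcal L)-1$ or $Pos(\mathcal L)$ depends on whether the fixed first edge is itself positively or negatively directed (i.e.\ on the sign of $L$), and in the second case one must prove that the rank-one term cancels exactly one negative direction. Your explanation of the ``except one'' (``overall scaling freedom / the constraint tying the $\delta\theta_k$ together'') identifies neither mechanism, and the assertion that each positively directed edge contributes a negative eigenvalue is false if read coordinatewise: for $\ell=(1,10,10)$ with signs $(+,+,-)$ one gets $L=1$ and the restriction of the Hessian to the $t_2$-axis equals $90\,t_2^2>0$, even though edge $2$ is positively directed (the index is still $1=Pos-1$, but only because the negative subspace is not a coordinate subspace). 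A genuine diagonalization of $-L\,\mathrm{diag}(b_k)+b\,b^{\top}$ (e.g.\ via a determinant or interlacing argument) is needed, and this is also where nondegeneracy, hence genericity ($\sum_k\epsilon_k\ell_k\neq0$ for every sign vector), enters.

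A second, smaller gap: the claim that along $\{x=0\}$ the normal Hessian is ``automatically'' positive definite because $f\ge0$ vanishes exactly there is not an argument (the function $t\mapsto t^4$ vanishes to infinite order at its minimum). What works --- and what the paper itself uses in the proof of Proposition \ref{CorOneLeg}(2) --- is that, generically, no configuration with $x=0$ is aligned, so $\theta\mapsto x(\theta)$ is a submersion along this set; hence $f$ is a submersion composed with the nondegenerate minimum of $\|\cdot\|^2$ at the origin, which yields the Morse--Bott property with transversal index $0$ and identifies the critical manifold with the polygon space of the closed chain. You gesture at exactly this composition elsewhere in your sketch, so the fix is only to replace ``automatic'' by the submersion argument.
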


\subsection{Polygonal linkages  and polygon spaces: spiders with two legs}
A spider with two legs reduces to a polygon space  by adding a rigid bar connecting the two feet; now  edge lengths do  matter.

A \textit{polygonal $p$-linkage} is a sequence of positive numbers $L = (l_1,...,l_p)$. We
 interpret it as a collection of $p\geq 3$ rigid bars of lengths $l_i$
joined consecutively in a closed chain by revolving joints. We always assume that the triangle
inequality holds, so the chain  can close.
A \textit{planar configuration} of $L$ is a closed broken line whose consecutive edge lengths are $l_1,...,l_p$.
As follows from the definition, a configuration may have self-intersections.

The  \textit{configuration space}, or the \textit{polygon space} for short, is the set
of all configurations of $L$ modulo orientation preserving isometries of $\mathbb{R}^2$.
It does not depend on the ordering
of $l_i$  but depends on the values of the $l_i$.

The polygon space is a closed $(p-3)$-dimensional manifold iff   there are no aligned configurations, that is, no configuration of $L$ fits in a straight line.
 There are explicit  linear conditions   in $(l_1,...,l_p)$ for the smoothness  called \textit{Grashof conditions}. For these facts and for
 more information about polygon spaces see  \cite{Farber} and  \cite{KM} .

\smallskip
For spiders with an arbitrary number of legs we have the following criterion:

\begin{proposition}[\cite{SSB}, see also \cite{H}, \cite{Mou}] \label{p:smooth1}
\begin{enumerate}
  \item  The singular points of the spider space $\mathcal{S}$ correspond to:
\begin{itemize}
    \item[-] either two aligned legs lying in one and the same straight line,
    \item[-] or three aligned legs. 
\end{itemize}
  \item Generically, the spider space  $\mathcal{S}$ is a smooth, compact, orientable, algebraic manifold  whose  dimension is $$2-2n+\sum_{i=1}^{n}p_i.$$
\end{enumerate}

\end{proposition}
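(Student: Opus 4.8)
The plan is to realise $\mathcal{S}$ as the zero set of an explicit polynomial map and to extract everything from its Jacobian. Write $F\colon(\Rz^2)^{1+N}\to\Rz^{\sum_i p_i}$ for the map with components $g_{i,j}=\|a_i^{j+1}-a_i^j\|^2-\ell_{i,j+1}^2$, $i=1,\dots,n$, $j=0,\dots,p_i-1$, where $a_i^0=a_i$ is the fixed $i$-th foot, $a_i^{p_i}=x$ is the body, and $a_i^1,\dots,a_i^{p_i-1}$ are the joints of leg $i$; thus $\mathcal{S}=F^{-1}(0)$. A point $\xi\in\mathcal{S}$ is a smooth point precisely when $dF_\xi$ has maximal rank, i.e.\ when the gradients $\nabla g_{i,j}$ are linearly independent. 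Introducing the edge vectors $e_i^j=a_i^{j+1}-a_i^j\neq0$, one checks that $\nabla g_{i,j}$ is supported on the variables $a_i^j$ and $a_i^{j+1}$ with values $-2e_i^j$ and $+2e_i^j$ respectively (there is no contribution at the fixed foot $a_i^0$, and the contributions at $a_i^{p_i}=x$ are shared by all legs).

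First I would write a linear relation $\sum_{i,j}\lambda_{i,j}\nabla g_{i,j}=0$ and read it off coordinate by coordinate. Its component at a joint $a_i^k$ gives $\lambda_{i,k-1}e_i^{k-1}=\lambda_{i,k}e_i^k$, so along each leg $i$ the vectors $\lambda_{i,j}e_i^j$ all coincide with one common vector $w_i\in\Rz^2$; the component at the body $x$ gives $\sum_{i=1}^n w_i=0$. Since $e_i^j\neq0$, for each leg either $w_i=0$ and then $\lambda_{i,\bullet}\equiv0$, or $w_i\neq0$, which forces every $e_i^j$ to be parallel to $w_i$, i.e.\ leg $i$ to be aligned along the line through $X$ of direction $w_i$. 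Hence a nontrivial relation exists if and only if there is a nonempty set $S$ of aligned legs together with nonzero vectors $w_i$ ($i\in S$) along their respective lines (all of which pass through $X$) with $\sum_{i\in S}w_i=0$. Now $|S|=1$ is impossible; $|S|=2$ forces the two directions to be antiparallel, hence, since both lines pass through $X$, to be one and the same line; and $|S|\geq 3$ always admits such a relation because any three vectors in $\Rz^2$ are linearly dependent (and if such a dependence is forced to use a vanishing coefficient we are back in the $|S|=2$ case, i.e.\ two aligned legs on one line). This is exactly the dichotomy of part (1), and the argument is reversible, so each of the two listed situations does produce a singular point.

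For part (2), algebraicity and compactness are immediate: $\mathcal{S}$ is a common zero set of polynomials, it is closed, and it is bounded because each joint lies within distance $\sum_j\ell_{i,j}$ of the fixed foot $A_i$. To see that for generic data neither type of singular configuration occurs, I would use that an aligned leg $i$ forces $|A_iX|$ to be one of the finitely many numbers $\bigl|\sum_j\pm\ell_{i,j}\bigr|$, so that $X$ lies on a finite union of circles centred at $A_i$. If three legs were aligned, $X$ would lie on circles about three (generically non-collinear) feet; but two such circles meet in at most two points, and for generic feet and lengths these points avoid the third family, so this cannot happen. If two aligned legs lay on a common line, that line would have to pass through both their feet, hence would be the line $\overline{A_{i}A_{j}}$; along this fixed line each aligned leg places $X$ in a finite set of positions, and for generic feet these two finite sets are disjoint. (Equivalently, one can run a dimension count in $\mathcal{S}$ combined with a parametric transversality argument over the space of feet and lengths.) Therefore, for generic data, $0$ is a regular value of $F$, so $\mathcal{S}$ is a smooth submanifold of $(\Rz^2)^{1+N}$ of dimension $2(1+N)-\sum_i p_i=2-2n+\sum_i p_i$.

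Finally, orientability: on the smooth manifold $\mathcal{S}=F^{-1}(0)$ the differential $dF$ trivialises the normal bundle, so $T(\Rz^2)^{1+N}|_{\mathcal{S}}\cong T\mathcal{S}\oplus\underline{\Rz^{\sum_i p_i}}$; passing to top exterior powers, $\Lambda^{\mathrm{top}}T\mathcal{S}$ becomes a tensor product of trivial line bundles and is therefore trivial, so $\mathcal{S}$ is orientable. The main obstacle is part (2): the Jacobian computation of part (1) is essentially bookkeeping once the substitution $e_i^j=a_i^{j+1}-a_i^j$ is made, whereas ruling out the degenerate configurations for generic data requires the transversality/dimension discussion above to be carried out carefully — in particular one must check that the relevant ``bad'' sets of parameters form proper (semi)algebraic subsets and hence have measure zero.
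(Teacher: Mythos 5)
Your argument is correct, and it is worth noting that the paper itself gives no proof of this proposition at all: it is quoted from the literature (Shvalb--Shoham--Blanc, Hausmann, Mounoud), so you have in effect reconstructed the standard argument behind those references rather than deviated from one in the text. Your route --- realising $\mathcal{S}$ as $F^{-1}(0)$, reading a linear relation $\sum\lambda_{i,j}\nabla g_{i,j}=0$ off the joints to get a single vector $w_i=\lambda_{i,j}e_i^j$ per leg, and off the body to get $\sum_i w_i=0$ --- is exactly the right mechanism: it cleanly explains why one aligned leg alone is harmless ($w_i=0$ is forced), why two aligned legs are singular only when antipodal through $X$, i.e.\ on a common line, and why three aligned legs are always singular (three directions in $\Rz^2$ are dependent, and a dependence with a vanishing coefficient degenerates to the two-leg collinear case). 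The dimension count $2(1+N)-\sum p_i=2-2n+\sum p_i$ and the orientability via the trivialised normal bundle are both standard and correct. The only place that deserves a little more care is the genericity step: you should make explicit that the two bad configurations impose closed algebraic conditions of positive codimension on the parameter space $(A_1,\dots,A_n,\ell_{i,j})$ --- for two collinear aligned legs the condition is $|A_iA_j|=\bigl|\sum_k\pm\ell_{i,k}\bigr|\pm\bigl|\sum_k\pm\ell_{j,k}\bigr|$ for some sign choice, and for three aligned legs it is the concurrency of three circles with radii from finite length-determined sets --- so that the exceptional parameters form a finite union of proper algebraic hypersurfaces, which is what the paper means by ``generically'' (an open dense, full-measure complement). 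With that made precise, your proof is complete and self-contained, which is more than the paper offers for this statement.
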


\subsection{The work map}
Consider the fibres of the work map     $\pi\colon \mathcal{S}\to\mathcal{W}$. We fix the point $X$ in work space, different from the  feet points $A_i$.  We are left with the freedom of $n$ legs with fixed starting points $A_i$ and end point $X$. Each of them constitute a polygonal linkage if we add the edge $XA_i$ to $\mathcal{L}_i$ with length $l_{i,0 } = || x-a_i||$: {\em the closure of the leg}. Each of these linkages defines a polygon space. 
If $X=A_i$, the $i$-th leg itself is closed and yields a polygonal linkage. 
 It follows:

 \begin{proposition}\label{p:product}\begin{enumerate}
                                       \item The fibre of the work map over $X \ne A_i$ is the  product of the polygon spaces of the linkages, which are the closures of the legs with endpoint $X$.
                                       \item If $X=A_i,$  the fibre of the work map over $A_i$ equals the product of the polygon space of the corresponding linkage with a circle $S^1$ and the polygon spaces of the other leg closures.
                                     \end{enumerate}

\end{proposition}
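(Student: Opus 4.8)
The plan is to describe $\pi^{-1}(X)$ one leg at a time. The basic point is that, once the body position $X$ is fixed and the feet $A_i$ are fixed by hypothesis, the joints of distinct legs are subject only to the length constraints internal to their own leg, so they move entirely independently: a point of $\pi^{-1}(X)$ is nothing but a choice, for each $i$, of positions $A_i^1,\dots,A_i^{p_i-1}$ with $|A_i^jA_i^{j+1}|=\ell_{i,j+1}$, $A_i^0=A_i$, $A_i^{p_i}=X$. Thus $\pi^{-1}(X)$ is the direct product over $i$ of the sets $\mathcal{C}_i(X)$ of admissible joint positions of the $i$-th leg, and everything reduces to identifying each $\mathcal{C}_i(X)$.

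For (1), suppose $X\neq A_i$. I adjoin to the $i$-th leg the extra bar $[X,A_i]$ of length $l_{i,0}=\|x-a_i\|>0$, obtaining the closed polygonal linkage $L_i$ that is the closure of the leg. A point of $\mathcal{C}_i(X)$ is exactly a configuration of $L_i$ in which this extra bar occupies its prescribed position in $\Rz^2$. Since the bar has distinct endpoints, the stabilizer of a placed bar in $\mathrm{Isom}^+(\Rz^2)$ is trivial, the action of $\mathrm{Isom}^+(\Rz^2)$ on configurations of $L_i$ is free, and ``extra bar in the fixed position'' is a global slice for it; hence this set maps homeomorphically onto the polygon space of $L_i$. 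This gives $\mathcal{C}_i(X)\cong$ (polygon space of the closure of leg $i$), which is nonempty precisely because $X\in\mathcal{W}$ forces $L_i$ to close, and (1) follows.

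For (2), suppose $X=A_i$; since the feet are pairwise distinct this occurs for at most one index, and for every $j\neq i$ the argument above applies unchanged with $l_{j,0}=\|a_i-a_j\|>0$. The $i$-th leg is itself a closed linkage $L_i=(\ell_{i,1},\dots,\ell_{i,p_i})$, anchored only at the single vertex $A_i$. Pinning that vertex uses up the translations and leaves the rotation group $S^1$ acting, still freely (a nontrivial rotation about $A_i$ cannot fix the neighbouring joint, which sits at positive distance $\ell_{i,1}$), with quotient the polygon space of $L_i$. The direction of the first edge $[A_i,A_i^1]$ gives a global section of this principal $S^1$-bundle --- rotate any configuration so that the first edge points in a fixed reference direction --- so the bundle is trivial, $\mathcal{C}_i(A_i)\cong(\text{polygon space of }L_i)\times S^1$, and reassembling the factors yields (2).

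The one point that needs genuine care, rather than being formal, is the slice statement: that nailing down a non-degenerate bar (for an open leg, closed up) --- or a single vertex together with the direction of one incident edge (for a leg that is already closed) --- is a global slice for the free $\mathrm{Isom}^+(\Rz^2)$-action, so that the resulting subset is canonically the polygon space. Once this is checked, the product decomposition over the legs is immediate.
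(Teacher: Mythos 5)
Your proof is correct and follows essentially the same route as the paper, which simply observes that fixing $X$ decouples the legs and that each leg with both endpoints pinned is the polygon space of its closure (the $S^1$ factor appearing when the leg is already closed at $A_i$). Your only addition is to make explicit the slice argument for the free $\mathrm{Isom}^+(\Rz^2)$-action identifying pinned configurations with the polygon space, which the paper leaves implicit.
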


\subsection{Discs-Annuli structure on the work space} \label{s:annuli}

Fix a foot $A_i$ and assume for the moment that  the leg attached to it is moving freely in ${\Rz}^2$. Consider for the  leg $\mathcal{L}_i$ the squared distance  between the foot and the body  $$|| a_i - x ||^2.$$  According to Proposition \ref{p:robotarm} its critical points
correspond to { aligned legs}, (i.e. all the points $A_i^j$ lie on the line ${\Rz}(x-a_i)$).
The critical points project via $\pi$ to circles with centre $A_i$ and {\it critical} radii $r_i^{\epsilon_i}$:
$$C(A_i,r_i^{\epsilon_i}) := \{z\in {\Rz}^2 \mid ||x-a_i|| = r_i^{\epsilon_i} \}.$$
with
$ r_i^{\epsilon_i} = | \sum_{j=1}^{p_i} \epsilon_{i,j} l_{i,j}| $ and $\epsilon_i = (\epsilon_{i,j})_{j=1}^{p_i}$; $\epsilon_{i,j} \in \{-1,+1\}$.  
{This construction includes the possibility that  (for a non-generic case) a critical radius is zero and the circle degenerates to a point.}

\begin{figure}

\begin{center}
 \includegraphics[width=6cm]{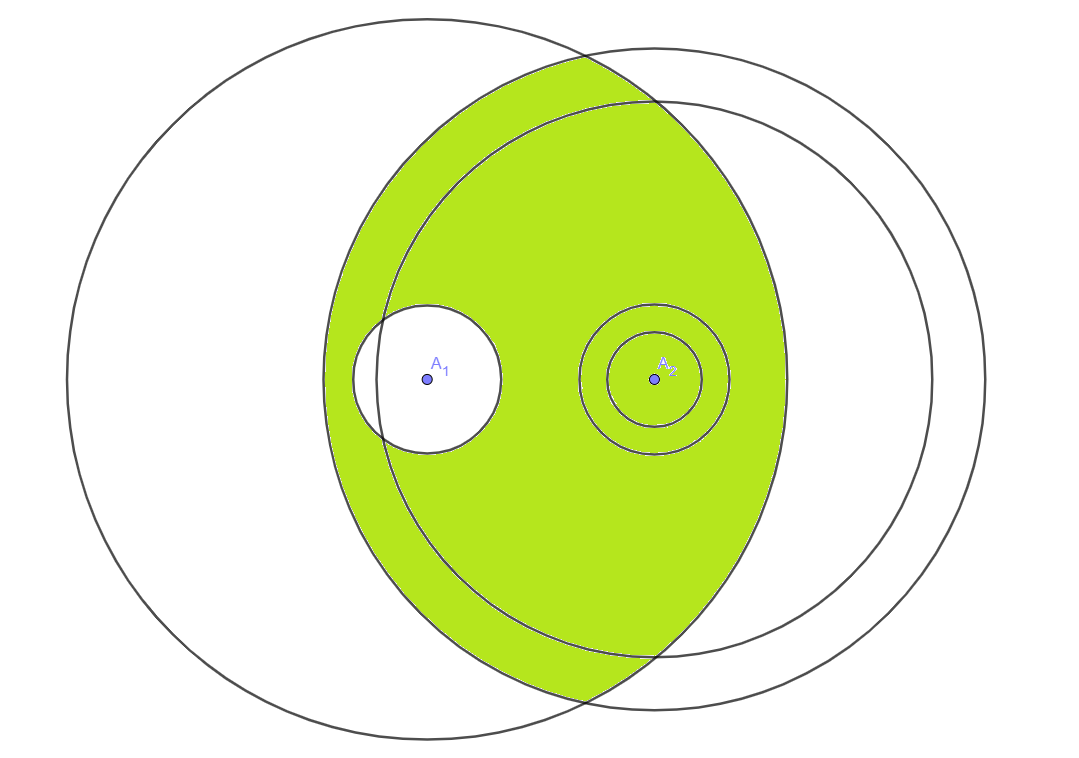}
   \caption{ Critical circles and work space.}
\end{center}
\end{figure}
We denote by $r_i$, respectively $R_i$, the minimal and the maximal values of $||x-a_i||$. They define the zone:
$$\mathcal{Z}(A_i):=\{x\in {\Rz}^d\mid \rho_i\leq ||x-a_i||\leq R_i\},$$
which is either a disc or an annulus with center $A_i$.

Here  $$\rho_i=\left\{
                                                         \begin{array}{ll}
                                                           r_i, & \hbox{if the leg cannot close;} \\
                                                           0, & \hbox{otherwise.}
                                                         \end{array}
                                                       \right.
$$ 
The zone  $\mathcal{Z}(A_i)$ is the work space of  the free leg.

\medskip

The following three facts have obvious proofs.
\begin{proposition}The work space of a spider linkage with $n$ legs is the intersection of all the zones:
    $\mathcal{W}=\bigcap_{i=1}^n \mathcal{Z}(A_i)$.
\end{proposition}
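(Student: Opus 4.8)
The plan is to show both inclusions. For the inclusion $\mathcal{W}\subseteq\bigcap_{i=1}^n\mathcal{Z}(A_i)$, take any $X\in\mathcal{W}$, so $X=\pi(\xi)$ for some configuration $\xi\in\mathcal{S}$. Restricting $\xi$ to the $i$-th leg gives a configuration of the free leg $\mathcal{L}_i$ (a robot arm from $A_i$ to $X$), so $X$ lies in the work space of that free leg. By the description just given, the work space of the free leg is exactly the zone $\mathcal{Z}(A_i)$; hence $X\in\mathcal{Z}(A_i)$ for every $i$, and therefore $X\in\bigcap_{i=1}^n\mathcal{Z}(A_i)$.

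For the reverse inclusion $\bigcap_{i=1}^n\mathcal{Z}(A_i)\subseteq\mathcal{W}$, take $X\in\bigcap_{i=1}^n\mathcal{Z}(A_i)$. For each $i$, since $X\in\mathcal{Z}(A_i)$ and $\mathcal{Z}(A_i)$ is the work space of the free leg $\mathcal{L}_i$, there is a configuration of the $i$-th leg joining $A_i$ to $X$; equivalently (by Proposition \ref{p:product} and the discussion of leg closures) the polygonal linkage obtained by closing $\mathcal{L}_i$ with the bar $XA_i$ of length $\|x-a_i\|$ admits a planar configuration, i.e. the relevant triangle/closing inequalities hold. Now assemble a full spider configuration by choosing, independently for each $i$, such a configuration of the $i$-th leg with body at $X$. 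The legs only share the body point $X$ (self-intersections and inter-leg intersections being allowed), so the resulting tuple $(X,\dots)$ satisfies all the length constraints defining $\mathcal{S}$; thus $X=\pi(\xi)\in\mathcal{W}$.

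The main point to get right — though it is essentially a bookkeeping matter rather than a genuine obstacle — is the identification, already recorded in the excerpt, that $\mathcal{Z}(A_i)$ is precisely the work space of the free $i$-th leg: one uses that $\|x-a_i\|$ ranges continuously over the interval $[\rho_i,R_i]$ as the arm flexes, with the lower endpoint being $0$ exactly when the arm can close and $r_i$ otherwise, and that every intermediate radius is achieved (connectedness of the torus $(S^1)^{p_i-1}$ of arm configurations, together with the intermediate value theorem applied to $\|x-a_i\|$). Once this is in hand, the proof is just the observation that a spider configuration is nothing but an independent choice of a configuration for each leg sharing the common body $X$, so the work space is the intersection of the individual leg work spaces. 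This is why the author can legitimately call the proof "obvious."
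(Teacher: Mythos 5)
Your proof is correct and is precisely the argument the paper has in mind when it declares the proposition to have an ``obvious proof'': the zone $\mathcal{Z}(A_i)$ is the work space of the free $i$-th leg (by connectedness of the arm's configuration torus and the intermediate value theorem applied to $\|x-a_i\|$), and a spider configuration with body $X$ is nothing more than an independent choice of one configuration per leg sharing that body, so $\mathcal{W}$ is the intersection of the leg work spaces. Both inclusions are handled cleanly, including the harmless case $X=A_i$, so nothing further is needed.
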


\begin{proposition}
Assume $\mathcal{S} $ is smooth. The critical locus
    of the projection $\pi\colon\mathcal{S}\to\mathcal{W}$ consists of the fibres with at least 1 aligned leg. The discriminant set coincides with those points of the work space $\mathcal{W}$, that lie on one of the circles $C(A_i,r_i^{\epsilon_i})$.
\end{proposition}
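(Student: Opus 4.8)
The plan is to reduce everything to the case of a single robot arm and then reassemble the legs through a fibre-product argument. First I would regard $\mathcal{S}$ as a fibre product: for each $i$ let $\mathcal{S}_i\cong(S^1)^{p_i}$ be the configuration torus of the $i$-th leg with the foot $A_i$ held fixed, and let $\pi_i\colon\mathcal{S}_i\to\mathbb{R}^2$ send a leg configuration to its free endpoint, so that $\pi_i(\theta_{i,1},\dots,\theta_{i,p_i})=a_i+\sum_{j=1}^{p_i}\ell_{i,j}(\cos\theta_{i,j},\sin\theta_{i,j})$. Then the image of $d(\pi_i)$ at a configuration is $\mathrm{span}\{u_{i,j}^{\perp}:j=1,\dots,p_i\}$, where $u_{i,j}$ is the unit vector along the $j$-th edge; this image is all of $\mathbb{R}^2$ unless every edge of the $i$-th leg is parallel to one common direction $u$, i.e.\ unless that leg is aligned, in which case it is the line $u^{\perp}$ (never $\{0\}$, since $p_i>1$). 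This is the infinitesimal content of Proposition \ref{p:robotarm}. Next I would write $\mathcal{S}=\Phi^{-1}(0)$ for $\Phi\colon\prod_i\mathcal{S}_i\to(\mathbb{R}^2)^{n-1}$, $\Phi=(\pi_1-\pi_2,\dots,\pi_{n-1}-\pi_n)$, noting that $\pi=\pi_1=\dots=\pi_n$ on $\mathcal{S}$; hence $T_\xi\mathcal{S}\subseteq\ker d\Phi_\xi=V:=\{(v_1,\dots,v_n)\mid d(\pi_1)v_1=\dots=d(\pi_n)v_n\}$, and $d\pi_\xi(T_\xi\mathcal{S})\subseteq d\pi_\xi(V)=\bigcap_{i}\mathrm{Im}\,d(\pi_i)_{\xi_i}$.

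These two inclusions give both halves. If some leg $i$ is aligned at $\xi$ along a direction $u$, then $d\pi_\xi(T_\xi\mathcal{S})\subseteq u^{\perp}$, so $\xi$ is a critical point of $\pi$. Conversely, if no leg is aligned, then every $\mathrm{Im}\,d(\pi_i)_{\xi_i}=\mathbb{R}^2$, so $V$ is a fibre product of $n$ surjections onto $\mathbb{R}^2$ with $\dim V=\sum_i p_i-2(n-1)$; by Proposition \ref{p:smooth1} this equals $\dim\mathcal{S}$, and since $\mathcal{S}$ is smooth with $T_\xi\mathcal{S}\subseteq V$, equality of dimensions forces $T_\xi\mathcal{S}=V$, whence $d\pi_\xi$ is onto and $\xi$ is a regular point. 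This identifies the critical locus of $\pi$ with the set of configurations having at least one aligned leg.

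For the discriminant I would invoke the discs–annuli discussion of Section \ref{s:annuli}: a configuration of the $i$-th leg is aligned exactly when its endpoint $X$ satisfies $\|x-a_i\|=r_i^{\epsilon_i}$ for some sign vector $\epsilon_i$, i.e.\ $X\in C(A_i,r_i^{\epsilon_i})$. Hence $X$ lies in the discriminant iff there is a spider configuration over $X$ with some aligned leg, and this happens precisely when $X\in\mathcal{W}$ (so the other legs reach $X$ at all) and $\|x-a_i\|=r_i^{\epsilon_i}$ for some $i$ and $\epsilon_i$; for the nontrivial direction one simply puts the $i$-th leg in the corresponding aligned position and leaves the remaining legs in any position attaining $X$, which exists by $X\in\mathcal{W}$. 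So the discriminant is $\mathcal{W}\cap\bigcup_{i,\epsilon_i}C(A_i,r_i^{\epsilon_i})$. The only step that needs a little care is the dimension count in the second paragraph, which rests on $\mathcal{S}$ being cut out with its expected dimension wherever it is smooth (Proposition \ref{p:smooth1}); the degenerate situation of a vanishing critical radius, where $C(A_i,0)=\{A_i\}$, is then absorbed without change — which is why the statement can be called obvious.
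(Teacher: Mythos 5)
Your proof is correct, and it follows the route the paper intends: the paper declares this proposition to have an ``obvious proof'' and the underlying computation --- that $\operatorname{Im}d_\xi\pi$ drops to a line exactly when a leg is aligned, as the paper itself uses explicitly in the proof of Theorem \ref{t:critical} --- is precisely what you carry out, supplemented by the fibre-product dimension count for the regular direction. Nothing to object to.
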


\subsection{The Euler characteristic of the spider space}\label{p:eulerchar}

The discriminant divides the work space in strata $\sigma$ of dimension 0,1 and 2 in such a way that above each stratum the fibre of $\pi$ has constant topological type.

\begin{proposition} In the situation considered, the Euler characteristic of the spider is
$$\chi (\mathcal{S} )= \sum_{\sigma}  (-1)^{dim( \sigma)} \chi (\sigma) \chi(\pi^{-1} (x_{\sigma}) )  $$
where the sum ranges over all the strata, and $x_{\sigma}$ is any point of the stratum $\sigma$.
\end{proposition}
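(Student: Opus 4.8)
The plan is to compute $\chi(\mathcal{S})$ by using the work map $\pi\colon\mathcal{S}\to\mathcal{W}$ as a stratified fibration. First I would invoke the statement (already asserted in the paragraph preceding the Proposition) that the discriminant set decomposes $\mathcal{W}$ into finitely many locally closed strata $\sigma$ of dimensions $0,1,2$, over each of which $\pi$ restricts to a locally trivial fibre bundle; this is the content of the preceding ``discs–annuli'' discussion together with Proposition~\ref{p:product}, which identifies each fibre explicitly as a product of polygon spaces (and, over the special points $A_i$, a product with an extra $S^1$). The key point to extract is that the topological type of $\pi^{-1}(x)$ is constant as $x$ ranges over a single stratum, so the notation $\pi^{-1}(x_\sigma)$ is well defined up to homeomorphism.

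Next I would assemble the global Euler characteristic from the pieces. Because $\mathcal{S}$ is the disjoint union of the preimages $\pi^{-1}(\sigma)$ over the strata $\sigma$, and Euler characteristic with compact supports is additive over such a finite stratification into locally closed sets, we get
$$\chi_c(\mathcal{S})=\sum_\sigma \chi_c(\pi^{-1}(\sigma)).$$
Then, since $\pi\colon\pi^{-1}(\sigma)\to\sigma$ is a fibre bundle with fibre $F_\sigma:=\pi^{-1}(x_\sigma)$, multiplicativity of $\chi_c$ in fibre bundles gives $\chi_c(\pi^{-1}(\sigma))=\chi_c(\sigma)\cdot\chi_c(F_\sigma)$. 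Hence $\chi_c(\mathcal{S})=\sum_\sigma \chi_c(\sigma)\chi_c(F_\sigma)$. Finally, for a real semialgebraic set (or a finite CW complex) compactly supported Euler characteristic coincides with ordinary Euler characteristic; $\mathcal{S}$ is compact (Proposition~\ref{p:smooth1}), each fibre $F_\sigma$ is a compact manifold, and each open stratum $\sigma$ of dimension $d$ is an open $d$-manifold with $\chi_c(\sigma)=(-1)^d\chi(\sigma)$. Substituting these identities yields exactly the claimed formula $\chi(\mathcal{S})=\sum_\sigma (-1)^{\dim\sigma}\chi(\sigma)\,\chi(\pi^{-1}(x_\sigma))$.

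The main obstacle — and the reason the Proposition is phrased as having an ``obvious'' proof — is justifying the two multiplicativity/additivity inputs cleanly in the semialgebraic category: that $\pi$ is genuinely a locally trivial bundle over each stratum (rather than merely fibre-homeomorphic pointwise), and that the sign-twisted relation $\chi_c(\sigma)=(-1)^{\dim\sigma}\chi(\sigma)$ applies to the strata that actually occur. The first follows from Thom's first isotopy lemma applied to the semialgebraic map $\pi$ once a Whitney stratification refining the discriminant is chosen; the second holds because the $0$- and $1$-dimensional strata are (finite unions of) points and open arcs, and the $2$-dimensional strata are open subsets of $\mathcal{W}\subset\mathbb{R}^2$, for all of which Poincaré–Lefschetz duality gives $\chi_c=(-1)^{\dim}\chi$. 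Once these standard facts are in place the formula is immediate, so the write-up can legitimately be brief.
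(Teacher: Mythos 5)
Your argument is correct and is exactly the standard fleshing-out of what the paper dismisses with ``follows from the properties of the Euler characteristic'': additivity of $\chi_c$ over a locally closed stratification, multiplicativity over the locally trivial fibration on each stratum (guaranteed by Thom's first isotopy lemma), and the identification $\chi_c=(-1)^{\dim}\chi$ on the strata together with $\chi_c=\chi$ on the compact total space and fibres. No gap; this matches the paper's intended proof.
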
 

The proof follows from the properties of the Euler characteristic.
This formula extends the formula in \cite{Mou} for legs with only two edges to the general case. The fibres are polygon spaces. In the smooth case their Betti numbers and Euler characteristic were computed in \cite{FS}. See the tripod example in Section \ref{ss:tripod}.

\section{The squared distance function  as a Morse function} \label{s:sqmorse}
\subsection{Squared distance function defined on the spider space.}

 For a  given spider mechanism, let $Z\in {\Rz}^2$ be  a fixed point. The squared distance  from the body $X$ to $Z$ is a function defined  on the spider space. More precisely, 
set $D(x)=||x-z||^2$, for $x\in {\Rz}^2$ and $\mathbb{D}(\xi)=D(\pi(\xi))$, for $\xi \in\mathcal{S}$.
In this section we intend to study the critical point theory of $\mathbb{D}$. We will assume that the spider space $\mathcal{S}$  is smooth, i.e. satisfies the conditions mentioned in Proposition \ref{p:smooth1}, although the non-smooth case is also of interest.

\begin{definition}
A  smooth function $ f : M \to  \mathbb{R}$ on a smooth manifold $M$ is called a \textit{Morse-Bott function}  if and only if  its critical set $\Sigma (f)$ is a disjoint union of connected submanifolds, and for each connected component $C \subset \Sigma(f)$  the Hessian of the restriction of $f$ to a normal slice is non-degenerate for all $ x \in C$. The index of this Hessian is called the Morse-Bott index or transversal Morse index of the connected component $C$.
\end{definition}
For properties of Morse-Bott functions we refer to Bott \cite{Bo}, the textbook   \cite{Ni} and to Section \ref{ss:MBpol}
on the Morse-Bott  polynomial.

\smallskip

We shall show that generically, $\mathbb{D}$ is a Morse-Bott function.  The \textit{genericity}  notion now becomes more restrictive. It includes the
conditions of smoothness of the spider space, but also some more conditions on the relative position of $Z$, that will be specified later.  Anyhow, the genericity  condition defines  an open
everywhere dense subset of the parameter space of spider mechanism.

\subsection{Critical points}

Assume we have a spider mechanism with a smooth spider space $\mathcal{S}$.
  For  its leg $\mathcal{L}_i$, define its extension $z\mathcal{L}_i$ as the robot arm  obtained by adding the edge $ZA_i$ before the leg
  $\mathcal{L}_i$. See Figure \ref{fig:constructions}. This  robot arm starts with $Z$, ends   at $X$, and has $p_i + 1 $ edges.
  
  \begin{figure}[h]
  \includegraphics[width=14cm]{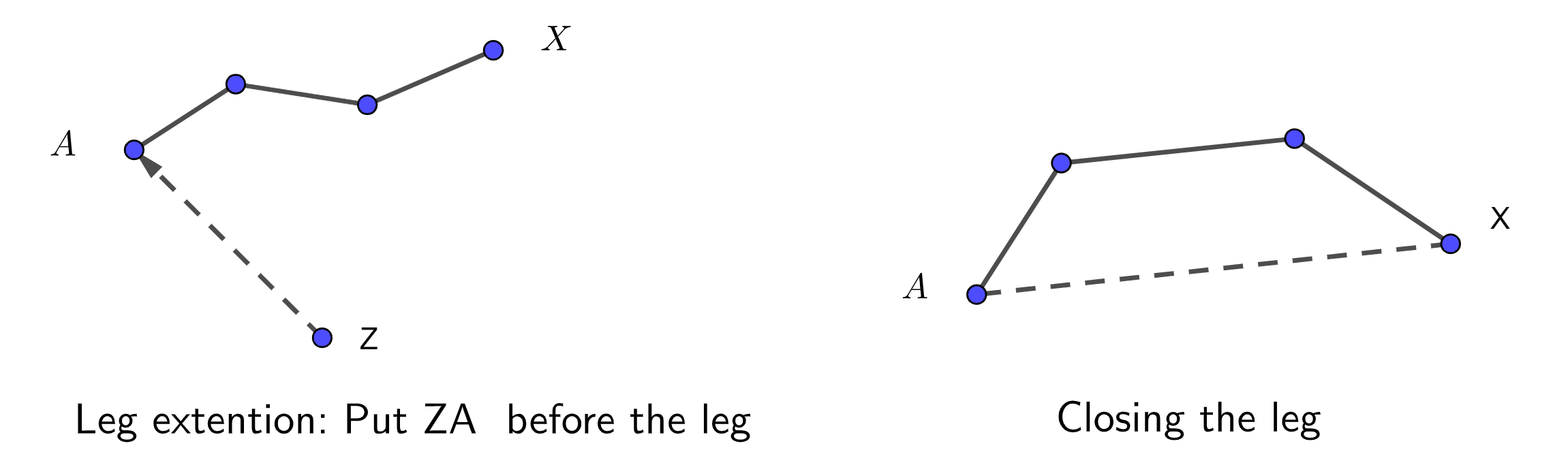}
  \caption{Constructions with legs}
  \label{fig:constructions}
  \end{figure}

Let us first consider  mechanisms with only one leg.
We assume that the following \textit{genericity condition}  holds:
\begin{itemize}
\item[$(\star)$]
 no two points $A$, $X$, and $Z$ coincide in an aligned configuration of the leg. 
\end{itemize}

 Proposition \ref{p:robotarm} implies:
\begin{proposition}\label{CorOneLeg}
 For  a  generic one-legged spider, the critical points of  $\mathbb{D}=||x-z||^2$ are:
         \begin{enumerate}

           \item
            A spider $\xi $ with $X\neq Z$  iff it is aligned, and the point $Z$ lies on the straight line  containing the spider.
           In this case the critical point is Morse and the Morse index is
           $$ M(\xi, Z) = Pos ( z\mathcal{L}) - 1$$
           In terms of the leg $\mathcal{L}$ this can be written as
           $$M(\xi, Z)=\left\{
                      \begin{array}{ll}
                       p- Pos(\mathcal{L}), & \hbox{if $X$ lies between $A$ and $Z$;} \\
                        Pos(\mathcal{L})-1, & \hbox{if $Z$ lies between $A$ and $X$;} \\
                          Pos(\mathcal{L}), & \hbox{if $A$ lies between $Z$ and $X$.}
                      \end{array}
                    \right.
           $$

            \item The (possibly empty) submanifold of $\mathcal{S}$  defined by $ X = Z$ is the global minimum manifold of $\mathbb{D}$. It is a Morse-Bott  critical submanifold with   Morse-Bott index  $0$.

         The  manifold is diffeomorphic to the configuration space of the polygonal linkage obtained by adding the bar $ZX$ to the robot arm.
         \end{enumerate}
\end{proposition}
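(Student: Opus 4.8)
The plan is to reduce everything to the robot arm statement in Proposition \ref{p:robotarm} via the extension construction. Given a one-leg spider with leg $\mathcal{L}$ from foot $A$ to body $X$, form the extended robot arm $z\mathcal{L}$ by prepending the edge $ZA$; this is a robot arm with $p+1$ edges, start point $Z$ and end point $X$. The key observation is that the spider space $\mathcal{S}$ of the leg is canonically identified with the configuration space of $z\mathcal{L}$ in which the first bar $ZA$ is held fixed (which is legitimate since $Z$ and $A$ are both fixed points), and under this identification $\mathbb{D}(\xi)=\|x-z\|^2$ is exactly the squared distance between the endpoints of $z\mathcal{L}$. Thus $\mathbb{D}$ is Morse-Bott because, by Proposition \ref{p:robotarm}, the squared endpoint distance of a robot arm is, and the critical points and critical manifolds transfer verbatim: the isolated critical points are the aligned configurations of $z\mathcal{L}$, and the global-minimum manifold is $\{\xi : \|x-z\|=0\}=\{X=Z\}$.

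Next I would unwind what "aligned configuration of $z\mathcal{L}$" means in terms of the original leg $\mathcal{L}$. The arm $z\mathcal{L}$ is aligned precisely when all of $Z, A, A^1,\dots,A^{p-1}, X$ lie on one line; since $ZA$ is the first edge, this forces the line to be $\mathbb{R}(x-z)$ through $Z$ and $A$. Hence a spider $\xi$ with $X\neq Z$ is critical iff $\mathcal{L}$ is aligned \emph{and} $Z$ lies on the line containing the aligned leg. The genericity hypothesis $(\star)$ guarantees that in an aligned configuration no two of $A, X, Z$ coincide, so each edge of $z\mathcal{L}$ is unambiguously positively or negatively directed and $M(\xi,Z)=Pos(z\mathcal{L})-1$ is well-defined, as asserted.

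Then comes the bookkeeping for the three cases of the index formula. Here $Pos(z\mathcal{L})$ counts the edges of $z\mathcal{L}$ pointing in the direction $\overrightarrow{ZX}$. Split $Pos(z\mathcal{L})$ into the contribution of the first edge $ZA$ and the contribution of the remaining $p$ edges, i.e.\ the edges of $\mathcal{L}$. The sign of the edge $ZA$ relative to $\overrightarrow{ZX}$ is $+$ iff $A$ lies on the far side, i.e.\ $A$ lies between $Z$ and $X$, and $-$ otherwise; meanwhile, reorienting from $\overrightarrow{AX}$ to $\overrightarrow{ZX}$, each edge of $\mathcal{L}$ keeps its sign when $\overrightarrow{AX}$ and $\overrightarrow{ZX}$ agree (cases $Z$ between $A$ and $X$, or $A$ between $Z$ and $X$) and flips its sign when they are opposite (case $X$ between $A$ and $Z$, where a positively directed edge of $\mathcal{L}$ contributes a negative edge to $z\mathcal{L}$, so its contribution to $Pos(z\mathcal{L})$ is $p-Pos(\mathcal{L})$). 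Combining the two contributions in each of the three collinear orderings of $A, X, Z$ and subtracting $1$ yields exactly
\[
M(\xi,Z)=\left\{
\begin{array}{ll}
p-Pos(\mathcal{L}), & \hbox{if $X$ lies between $A$ and $Z$;}\\
Pos(\mathcal{L})-1, & \hbox{if $Z$ lies between $A$ and $X$;}\\
Pos(\mathcal{L}), & \hbox{if $A$ lies between $Z$ and $X$.}
\end{array}
\right.
\]
(One should double-check each case by a small picture, e.g.\ verifying that when $Z=A$ is approached the formula degenerates consistently, though $(\star)$ excludes equality.)

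Finally, for part (2): the set $\{X=Z\}$ is the global minimum of $\mathbb{D}\geq 0$ with value $0$, and its Morse-Bott index is $0$ since $\mathbb{D}$ cannot decrease there. To identify it as a manifold, note that fixing $X=Z$ turns the leg $\mathcal{L}$ together with the segment $ZX$ (of length $0$, or equivalently: identifying the endpoint $X$ of the robot arm with the fixed point $Z$) into a closed chain, so the configurations form the polygon space of the polygonal linkage obtained by adjoining the bar $ZX$ to the arm; this is exactly the statement of Proposition \ref{p:product}(2) applied with the spider body landing on a foot, and smoothness of this minimum manifold is part of the genericity assumption. The main obstacle I anticipate is purely the careful sign accounting in the three cases of the index formula — there is no deep difficulty, but it is easy to slip by one and one has to be scrupulous about the orientation conventions "positively directed" $=$ co-directed with $\overrightarrow{AX}$ for $\mathcal{L}$ versus with $\overrightarrow{ZX}$ for $z\mathcal{L}$.
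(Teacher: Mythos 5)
Your reduction to Proposition \ref{p:robotarm} via the extended arm $z\mathcal{L}$ is exactly the paper's route (the paper states Proposition \ref{CorOneLeg} as a consequence of Proposition \ref{p:robotarm} and only remarks that in the first case the positively directed edges of $\mathcal{L}$ become negatively directed in $z\mathcal{L}$), and your treatment of part (2) and of the identification of the minimum manifold with a polygon space likewise matches. However, in the one place where you (rightly) warn that care is needed, you do slip by one: your rule ``the sign of the edge $ZA$ relative to $\overrightarrow{ZX}$ is $+$ iff $A$ lies between $Z$ and $X$, and $-$ otherwise'' is wrong. The edge $\overrightarrow{ZA}$ is co-directed with $\overrightarrow{ZX}$ precisely when $A$ and $X$ lie on the same ray from $Z$, which happens both when $A$ lies between $Z$ and $X$ \emph{and} when $X$ lies between $A$ and $Z$; it fails only when $Z$ separates $A$ from $X$. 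With your rule as stated, the first case would give $Pos(z\mathcal{L})=p-Pos(\mathcal{L})$ and hence $M(\xi,Z)=p-Pos(\mathcal{L})-1$, contradicting the formula $p-Pos(\mathcal{L})$ you then assert. The correct bookkeeping is: first edge contributes $1$ in cases one and three and $0$ in case two, while the $p$ edges of $\mathcal{L}$ contribute $p-Pos(\mathcal{L})$ in case one (orientation reversal) and $Pos(\mathcal{L})$ in cases two and three; subtracting $1$ then reproduces all three lines of the stated formula. This is a local arithmetic fix, not a flaw in the method.
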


     \begin{proof}
     (1)  The statement follows straightforwardly. One should only take into account that
     in the first case (when  $X$ lies between $A$ and $Z$) the positively directed edges  turn to negatively directed ones.
     (2) The function $\mathbb{D}$ is the composition of a submersion and the Morse function, which attains its minimum  at $X=Z$. 
     \end{proof}

Now we pass to a spider with  $n$ legs.

\begin{theorem}
\label{t:critical}
For a smooth spider space, a configuration $\xi \in \mathcal{S}$ (satisfying $(\star)$) is critical  for $\mathbb{D}$ iff  it satisfies at least one of the following conditions:
  \begin{enumerate}
        \item  $X=Z$;
        \item  One leg is aligned, and the point $Z$ lies on this line;
        \item  Two different legs are aligned.
    \end{enumerate}

\end{theorem}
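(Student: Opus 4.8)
The strategy is to reduce the $n$-leg case to the one-leg case via the fibre structure of the work map (Proposition \ref{p:product}), combined with the Lagrange-multiplier characterization of criticality. The key observation is that $\mathbb{D} = D\circ\pi$ depends on a configuration $\xi$ only through its body $X = \pi(\xi)$, so $\xi$ is critical for $\mathbb{D}$ precisely when the differential $dD_X$ annihilates the image of $d\pi_\xi$, i.e. when $dD_X$ vanishes on the tangent space $T_X\mathcal{W}'$ in the directions actually reachable by moving $\xi$ inside $\mathcal{S}$. Concretely, $dD_X(v) = 2\langle x-z, v\rangle$, so $\xi$ is critical iff the vector $x-z$ is orthogonal to every velocity of the body $X$ that is realizable by an infinitesimal motion of the whole spider.

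First I would analyze which directions the body $X$ can move. Fixing attention on a single leg $\mathcal{L}_i$ with its foot $A_i$ fixed: treated as a free robot arm, the body can move in all of $\mathbb{R}^2$ unless the leg is aligned, in which case (by the infinitesimal rigidity analysis underlying Proposition \ref{p:robotarm}, applied to the extension $z\mathcal{L}_i$ or directly) the only directions of motion of $X$ are those orthogonal to the line through $A_i$ and $X$ — the body can swing about $A_i$ but cannot move radially. Hence the set of realizable body-velocities for the full spider is the intersection over $i$ of these per-leg cones of directions. If no leg is aligned, this intersection is all of $\mathbb{R}^2$, so criticality forces $x = z$; this is case (1). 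If exactly one leg, say $\mathcal{L}_i$, is aligned and the others are not, the realizable body-velocities are exactly the directions orthogonal to the line $\ell_i = \mathbb{R}(x-a_i)$ through $A_i$ and $X$; then $dD_X$ vanishes on them iff $x - z \in \ell_i$, i.e. $Z$ lies on that line — this is case (2). If two legs $\mathcal{L}_i, \mathcal{L}_j$ are aligned along distinct lines, the only realizable body-velocity is $0$ (the two orthogonal-direction lines meet only at the origin), so $X$ is infinitesimally pinned and $\xi$ is automatically critical regardless of $Z$ — this is case (3). One must also check the degenerate subcase where two aligned legs share the same line; here the body can still move orthogonally to that common line, and then criticality again requires $Z$ on that line, which is subsumed in (2). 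The converse — that each of (1), (2), (3) does produce a critical point — follows by reading the same orthogonality computation backwards.

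To make the ``realizable body-velocities of an aligned leg are exactly the orthogonal directions'' step rigorous, I would invoke Proposition \ref{p:robotarm} (or rather its proof) applied to the robot arm $\mathcal{L}_i$ itself: the squared distance $\|a_i - x\|^2$ between its endpoints has the aligned configurations as its critical points, meaning that at a non-aligned configuration the body can move so as to change $\|a_i-x\|$ (hence, together with the free rotational freedom, move in every direction), whereas at an aligned configuration the gradient of $\|a_i-x\|^2$ along the body-motion is zero — equivalently, the radial direction $x-a_i$ is not realizable, while the tangential one is (the leg can still be rotated rigidly about $A_i$). Assembling these leg-by-leg statements and intersecting is where the genericity hypothesis $(\star)$ and the smoothness of $\mathcal{S}$ are used: $(\star)$ guarantees that in an aligned configuration the relevant points are distinct so the ``line through $A_i$ and $X$'' is well defined, and smoothness guarantees $T_\xi\mathcal{S}$ surjects onto the expected space of leg-motions so that $d\pi_\xi$ has the image just described.

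The main obstacle I anticipate is the careful bookkeeping of the image of $d\pi_\xi$ at non-generic configurations — in particular justifying that for a smooth spider space the realizable body-velocities are exactly the intersection of the per-leg direction cones (no unexpected degeneration or extra constraint coming from the interaction of legs at the body $X$), and handling cleanly the borderline cases where an aligned leg has $X=A_i$ or where two aligned legs are collinear. This is essentially a linear-algebra / infinitesimal-rigidity computation, but it is the place where one genuinely needs Proposition \ref{p:smooth1} and the genericity assumption rather than soft arguments.
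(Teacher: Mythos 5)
Your proposal is correct and follows essentially the same route as the paper: both arguments reduce criticality of $\mathbb{D}=D\circ\pi$ to whether $d_xD$ annihilates the image of $d_\xi\pi$, identify that image as all of $\mathbb{R}^2$ (no aligned leg), the tangent line to a critical circle (one aligned leg), or $\{0\}$ (two aligned legs along distinct lines), and invoke Proposition \ref{p:smooth1} to exclude the degenerate configurations. Your leg-by-leg description of the realizable body-velocities is just a slightly more explicit version of the paper's identification of the critical locus of $\pi$, so there is nothing essentially new or missing.
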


\begin{proof}  

 For  $X=Z$ we clearly have the global minimum,
 so  below we assume $X \ne Z$.
 
 The derivative $ d_\xi\mathbb{D}$ of $\mathbb{D}$ is a composition of
    $$  d_\xi\pi: T_{\xi}\mathcal{S} \to T_{\pi(\xi)} \mathbb{R}^2  \; \;  \mbox{\rm and} \; \; d_xD: T_{\pi(\xi)} \mathbb{R}^2 \to \mathbb{R}.$$

   Here $d_xD$ is a submersion. If $\xi$ does not belong to the critical locus of $\pi$ then we get the composition of two submersions, so $\mathbb{D}$ is regular at $\xi$.

   As soon as $\xi $ has exactly one aligned leg, the image of $d_\xi\pi$
   is 1-dimensional (tangent line to a critical circle). The composition has maximal rank if $ZX $ is orthogonal to this image (or: the level curves of $||x-z||^2$ are tangent to the critical circle).
   As soon as $\xi $ has two non-parallel aligned legs, the image of $d_\xi\pi$ is 0-dimensional and therefore the composition is not regular.
   The conditions for smoothness from Proposition \ref{p:smooth1} allow us to restrict to  the above cases.
\end{proof}

\begin{definition}
  A pair: (a spider mechanism, a point $Z$) is called \textit{strongly  generic} if:

\begin{enumerate}
\item Its configuration space is smooth:
\begin{enumerate}
\item No configuration has more than two aligned legs.
  \item Two aligned legs are never parallel.
  \end{enumerate}
  
  \item In case of  two aligned legs, the point $Z$  never lies on any of these lines.
  \item The point $Z$  coincides with none of the feet $A_i$.
  \item For one aligned leg, $Z$ never coincides with $X$.
\end{enumerate}

\end{definition}

 \subsection{Morse indices}


Assuming strong genericity, we start with some preparations.
Let us have a deeper look at case 3  from Theorem \ref{t:critical}.  Assume that there are two aligned legs  (say, with feet at $A_1$ and $A_2$). The three points $A_1,A_2,X$ define a partition of the plane (cf. Figure \ref{fig:quadrants}) into four quadrants separated by the lines $\Rz [A_1X]$ and $\Rz [A_2X]$. Let $Z$ be in one of the quadrants and define the following signs, depending on $Z$ for the 2 aligned legs at $X$:
$$ \sigma (Z) = (\sigma_1(Z), \sigma_2(Z)) = (sign( A_1X \cdot ZX) ,sign (A_2X \cdot ZX)) $$

The next Theorem gives our final result about the Morse indices of spiders. We refer already to section \ref{s:examples} for figures, which could be helpful when following the proof.

\begin{figure}
\includegraphics[width=6cm]{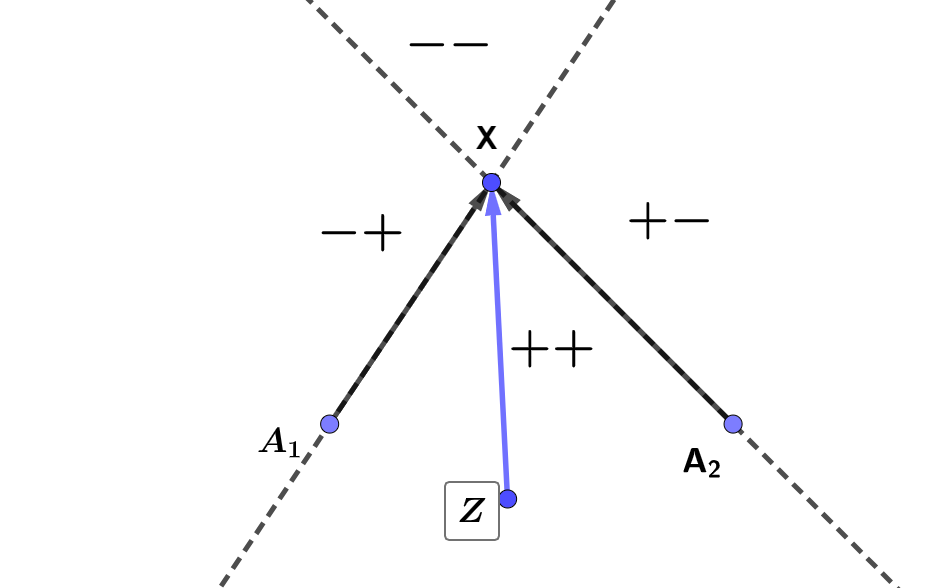}
\caption{Partition of the plane.}
\label{fig:quadrants}
\end{figure}

  \begin{theorem}
  \label{t:indices}
  For a strongly generic spider and a point $Z$, the function  $\mathbb{D}=||x-z||^2$ is a
 Morse-Bott function. The Morse-Bott indices  $\mu$ are as follows:
\begin{enumerate}
        \item The condition $X=Z$  defines a Morse-Bott critical manifold with the index $$\mu=0.$$ 
        \item One leg  $\mathcal{L}$ is aligned and $Z$ lies on this line. The Morse-Bott index of the critical manifold is $$\mu= M(\mathcal{L}, Z),$$ 
        see Proposition \ref{CorOneLeg}, where $\mathcal{L}$ is the one-legged spider obtained by eliminating all the legs except for the aligned one.

        \item  Two  legs are aligned.
       Then the Morse-Bott index  is
        $\mu=M_1+M_2,$
        where $M_i$  is defined by

          $$M_i =\left\{
                      \begin{array}{ll}
                       M(\mathcal{L}_i), & \hbox{if $\sigma_i(Z) > 0$} \\
                        p_i - M(\mathcal{L}_i), & \hbox{if $\sigma_i(Z) < 0$.}
                      \end{array}
                    \right.
           $$
           
           Here $M(\mathcal{L})$ is defined in Definition \ref{def:IndexRobArm}.
    \end{enumerate}

The critical manifolds in the three cases are  diffeomorphic to:
\begin{enumerate}
        \item  The Cartesian product of $n$ polygon spaces (obtained by adding the bar $A_iX$ to the leg $\mathcal{L}_i$),
        \item The Cartesian product of $n-1$ polygon spaces (obtained by closing non-aligned legs in a similar way),
        \item 
        The  Cartesian product of $n-2$ polygon spaces, also arising from closing of  non-aligned legs.
    \end{enumerate}

   \end{theorem}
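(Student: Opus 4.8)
The plan is to establish the Morse--Bott property together with the index formula by reducing everything, fibrewise and locally, to the one-leg case already treated in Proposition \ref{CorOneLeg} and to the product structure of the fibres from Proposition \ref{p:product}. First I would fix a critical configuration $\xi$ of one of the three types, and use the description of $T_\xi\mathcal{S}$ as a fibre product. Since the work map $\pi$ restricted to a neighbourhood of $\xi$ is, away from the discriminant, a submersion onto $\mathcal{W}$ with fibre the product of the polygon spaces of the leg closures (Proposition \ref{p:product}), the function $\mathbb{D}$ is pulled back from $\mathcal{W}$; hence in directions tangent to the fibre it is locally constant, and the normal Hessian only sees the directions that $\pi$ maps isomorphically to $T_X\mathcal{W}$. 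This immediately gives that the critical set is a union of submanifolds of the stated diffeomorphism type (case 1: all legs contribute a factor, $\pi$ being constant $=Z$; cases 2 and 3: the aligned legs are ``locked'' while $\pi$ restricted to the remaining legs is a submersion onto a neighbourhood of $X$ in the plane), and it reduces the index computation to a local model on $\mathbb{R}^2$, or on $\mathbb{R}^1$, or on a point.

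The heart of the argument is the index in case 3. Near such $\xi$ one may parametrise a slice transverse to the critical manifold by the two ``alignment angles'' $\theta_1,\theta_2$ of the two aligned legs; the work map sends $(\theta_1,\theta_2)$ to the intersection point of the two circles $C(A_1, r_1(\theta_1))$ and $C(A_2,r_2(\theta_2))$ (or rather to a local reparametrisation of $X$ by these two radial coordinates), and this is a local diffeomorphism precisely because the two aligned legs are non-parallel (strong genericity), so the two critical circles meet transversally. Thus $\mathbb{D}$ restricted to the slice is $D(x)=\|x-z\|^2$ expressed in the coordinates $(u_1,u_2)=(r_1,r_2)$ where $u_i$ measures signed displacement along the direction $\overrightarrow{A_iX}$. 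In these coordinates $D$ is a nondegenerate quadratic-type function whose Hessian's signature along the $u_i$-axis is governed by whether moving $X$ along $\overrightarrow{A_iX}$ increases or decreases the distance to $Z$, i.e. exactly by $\sigma_i(Z)=\mathrm{sign}(A_iX\cdot ZX)$: if $\sigma_i(Z)>0$ the local picture in the $i$-th variable is the same as for the one-leg arm $z\mathcal{L}_i$ in the subcase ``$Z$ between $A_i$ and $X$'' versus ``$A_i$ between $Z$ and $X$'', giving the summand $M(\mathcal{L}_i)$ by Proposition \ref{CorOneLeg}; if $\sigma_i(Z)<0$ the relevant direction is reversed, which by the same proposition flips $Pos$ to $p_i-Pos$, i.e. replaces $M(\mathcal{L}_i)=Pos(\mathcal{L}_i)-1$ by $p_i-M(\mathcal{L}_i)$. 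Since the normal Hessian splits as a direct sum over the two independent variables $u_1,u_2$, the index is $M_1+M_2$. Case 2 is the same computation with one fewer variable — the work map restricted to the slice is a local diffeomorphism from the single angle $\theta$ to a coordinate along $C(A,r(\theta))$, and the index is literally $M(\mathcal{L},Z)$ from Proposition \ref{CorOneLeg}. Case 1 is immediate: $\mathbb{D}\geq 0$ with equality exactly on the manifold $X=Z$, so it is a minimum, index $0$.

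It remains to confirm nondegeneracy of the normal Hessian (so that $\mathbb{D}$ is genuinely Morse--Bott and not merely having submanifolds of critical points). For case 1 this is the nondegeneracy of the minimum of $D$ on the plane composed with the submersion $\pi$. For cases 2 and 3 it follows from two facts: the transversal nondegeneracy of the one-leg squared distance function (Proposition \ref{p:robotarm}/\ref{CorOneLeg}), and the genericity condition that $Z$ does not lie on the aligned line(s) — this is exactly what guarantees $\sigma_i(Z)\neq 0$, so each diagonal entry of the normal Hessian is nonzero, and together with the transversality of the critical circles the full Hessian is invertible. The diffeomorphism-type claims then come from Proposition \ref{p:product} applied to the fibre over $X$ (for case 1, $X=Z$; for cases 2,3, the fibre over the fixed $X$ with the aligned legs removed), noting that the fibre of the aligned legs is a single point so they contribute no factor. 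I expect the main obstacle to be writing down cleanly the local model for $\pi$ near a type-3 critical point — i.e. verifying that $(\theta_1,\theta_2)\mapsto X$ is a local diffeomorphism and identifying the resulting change of variables well enough to read off that the Hessian signature is controlled by $\sigma_i(Z)$ and by $M(\mathcal{L}_i)$; once that change of coordinates is pinned down, the index count is just bookkeeping with $Pos$ as in Proposition \ref{CorOneLeg}.
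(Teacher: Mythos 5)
Your overall route is the same as the paper's: discard the non-aligned legs (they only contribute the polygon-space factors of the critical manifold), use the transversality of the two critical circles --- guaranteed by the non-parallelism in the strong genericity condition --- to take $(r_1,r_2)$ as local coordinates near $X_0$, and read off the normal Hessian as a block sum over the two aligned legs, each block being the robot-arm Hessian of $r_i$ scaled by $\partial D/\partial r_i$. Cases (1) and (2) and the identification of the critical manifolds are fine and match the paper.

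However, the case (3) index computation --- which you yourself single out as the crux --- is only asserted, and the assertion is loose or non-sequitur at exactly the delicate points. First, the normal slice is $(p_1+p_2-2)$-dimensional, not two-dimensional: what is needed is the $2$-jet of the map from the normal coordinates to $(\delta r_1,\delta r_2)$, whose quadratic parts are the Morse normal forms of $r_i$ in $p_i-1$ variables with $M(\mathcal{L}_i)$ minus signs (Proposition \ref{p:robotarm}), composed with the $1$-jet of $D$; this composition is precisely the computation carried out in the paper's proof, and it is what makes the normal Hessian split as $\alpha_1 Q_1\oplus\alpha_2 Q_2$. Second, the sign $\alpha_i=\partial D/\partial r_i$ is the derivative of $D$ along the coordinate vector field $\partial/\partial r_i$, and since $(r_1,r_2)$ are non-orthogonal coordinates this vector field is tangent to the \emph{other} critical circle (i.e.\ perpendicular to $A_jX$ for $j\ne i$), not ``along $\overrightarrow{A_iX}$'' as you write; consequently $\mathrm{sign}(\alpha_i)$ is constant on the four quadrants cut out by the lines $\Rz [A_1X]$ and $\Rz [A_2X]$ of Figure \ref{fig:quadrants} and flips exactly when $Z$ crosses the line of the other leg, so your identification of this sign with $\mathrm{sign}(A_iX\cdot ZX)$ does not follow from what you wrote and needs an argument. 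Third, the bookkeeping for $\sigma_i<0$ does not close up: reversing the sign of a nondegenerate form on $p_i-1$ variables turns index $M(\mathcal{L}_i)$ into $(p_i-1)-M(\mathcal{L}_i)$, while your stated rule ``flip $Pos$ to $p_i-Pos$'' applied to $M=Pos-1$ would give $p_i-Pos-1$; neither coincides with the $p_i-M(\mathcal{L}_i)$ you write down, and that value can violate the a priori bound $M_1+M_2\le\dim\mathcal{S}-\dim\Sigma=p_1+p_2-2$ (e.g.\ when $Pos(\mathcal{L}_i)=1$ for both legs and both $\sigma_i<0$). Until this arithmetic is actually derived from the normal form rather than matched to the target formula, part (3) of the argument is incomplete; I recommend testing it on the tripod example, where each block is one-dimensional and every sign can be checked by hand.
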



\begin{proof} (1) is clear  since $X=Z$  defines the global minimum manifold.

(2)  By the strong genericity assumption, only  one leg of the spider is aligned. Non-aligned legs contribute to the Morse-Bott critical manifold only, which is (via closing the legs, cf. Figure \ref{fig:constructions}) the product of polygonal spaces, so we eliminate them and leave only the aligned leg.  The statement follows from Proposition \ref{p:robotarm}.

(3)  By the strong genericity assumption, only two legs of the spider are aligned, and the lines that contain the legs are different.  As in (2), we can eliminate the non-aligned legs and leave only the two aligned legs. Denote the configuration space of the two-legged spider by $\mathcal{S}'$. Its dimension is $p_1+p_2-2$.

Let us compute the $2$-jet  of $\mathbb{D}$ at the critical point. The first order terms vanish since we have a critical spider.
Set $r_i :=|A_iX| =  |A_iX_0|+ \sum_j \pm \xi_{i,j}^2$ near the aligned legs, where $X_0$ is the  body of the critical spider, that is,  intersection point of the two critical circles. Near $X_0$  we  use $(r_1,r_2)$ as coordinates  for $X$. Define the unit vectors $e_1,e_2$ at the point $X_0$, having the same directions as $A_1X_0$  and $A_2X_0$ respectively.

Denote by  $\psi$  the map from local coordinates $\xi$ to the spider space to $X= (r_1,r_2)$, and
consider the second order term in the Taylor series expansion of ${D}\circ \psi$.  Only the 2-jet of $\psi$ and the 1-jet of $D=||x-z||^2 $ at $x_0$ are important. The latter can be written as $||z-x_0|| + (z-x_0) \cdot x $.\\
The composition of the two maps gives now
$$c + (z-x_0)\cdot (x_0 +  \sum_j \pm \xi_{1,j}^2 e_1  +  \sum_j \pm \xi_{2,j}^2  e_2)=
||x_0-z|| +  \alpha_1  \sum_j  \pm \xi_{1,j}^2+ \alpha_2  \sum_j \pm \xi_{2,j}^2  $$

with $\alpha_1 = (z-x_o) \cdot e_1$ and $\alpha_2 = (z-x_o) \cdot e_1$, which yields the result required.
\end{proof}


\subsection{Morse-Bott Polynomials}\label{ss:MBpol}
In the case of isolated singularities the counting of critical points via a generating polynomial, \textit{the Morse polynomial,} relates the critical points of the squared distance $\mathbb{D}$ with the topology of $\mathcal{S}$. 

\smallskip
The Morse-Bott polynomial related to the function $\mathbb{D}$ is defined
by
$$P_{\mathbb{D}} (t)= \sum_{\Sigma} P_{\Sigma}(t) \cdot t^{i(\Sigma)} :=\sum \mu_k (\mathbb{D}) t^k   $$
where  $P_{\Sigma}(t) = \sum b_k (\Sigma) t^k$ is the Poincar\'e polynomial, $b_k(\Sigma)$ the $k^{th}$  Betti number  and
$i(\Sigma)$  the transversal Morse index of the Morse-Bott singular component $\Sigma$.
One computes the coefficients  $\mu_k$ by performing the multiplication and summation.
In the case of isolated singularities $\mu_k$ coincides with the number of Morse points.

From \cite{Bo,Ni} we recall that
$$ P_{\mathbb{D}}(t) = P _{\mathcal{S}}(t) + (1+t) Q(t)$$
where $ P _{\mathcal{S}}(t)$ is the Poincar\'e polynomial of $\mathcal{S}$ and      $Q(t)$ is a polynomial with non-negative integer coefficients.
From this follow the  Morse inequalities: $\mu_k \ge b_k$.
Note that:
\begin{itemize}
\item $\mu_k$  depends on the position of $Z$, but is  locally constant on the complement of a bifurcation set.
\item For $t=-1$ we get the Euler-Poincar\'e characteristic $\chi(\mathcal{S}) = \sum (-1)^k \mu_k$: independent of $Z$.
\end{itemize}
In the next section we will use the Morse-Bott polynomial to get information about the topology of $\mathcal{S}$, i.e. for computing  $\chi(\mathcal{S})$ and upper bounds for the Betti numbers of $\mathcal{S}$.
For this  we need to know  $P_{\Sigma}$ and also the Betti-numbers of spider spaces.  We refer to the paper of Farber and Sch\"utz \cite{FS}, where there is a complete result in terms of short, medium and long legs.
 Examples can be found in Section \ref{s:examples}.


\section{Examples}\label{s:examples}
In this section we compute critical points of the squared distance  function and their indices and compare them with topological invariants of $\mathcal{S}$.
For efficiency of the exposition we restrict to examples with a few legs only.

\subsection{A tripod}\label{ss:tripod}
Spider  mechanisms  with only two edges for each leg are easy to analyse. In strongly generic cases  $\mathbb{D}$  is a Morse function, and there are isolated critical points only.  

As an example, consider a spider whose feet are positioned at the vertices of an equilateral triangle, and the edge lengths are as is depicted in Figure   \ref{fig:tripodfig}.

\begin{figure}[ht]
\begin{minipage}[t]{0.45\linewidth}
\centering
\vspace{-6.5cm}
\includegraphics[
                 width=0.95\linewidth]{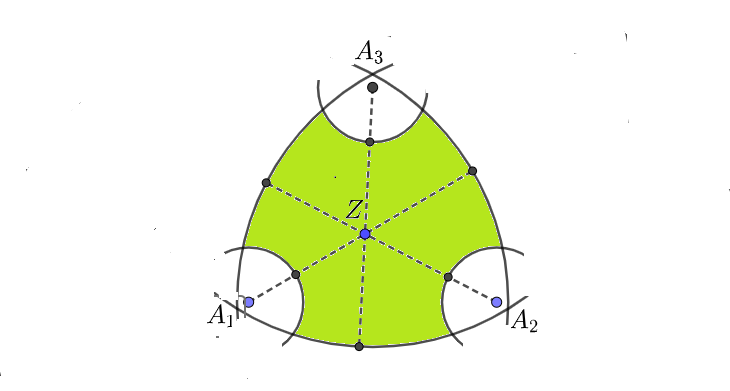}
                 (Above) The work space of the tripod.\\
                 (Right) The critical configurations of the tripod:
                 (a) minima in $Z$,\\ (b) saddles with one aligned leg, \\ (c) maxima with one aligned leg,\\ (d) saddles with two aligned legs.
\end{minipage}%
\begin{minipage}[t]{0.55\linewidth}
\begin{center}
 \includegraphics[width=6cm]{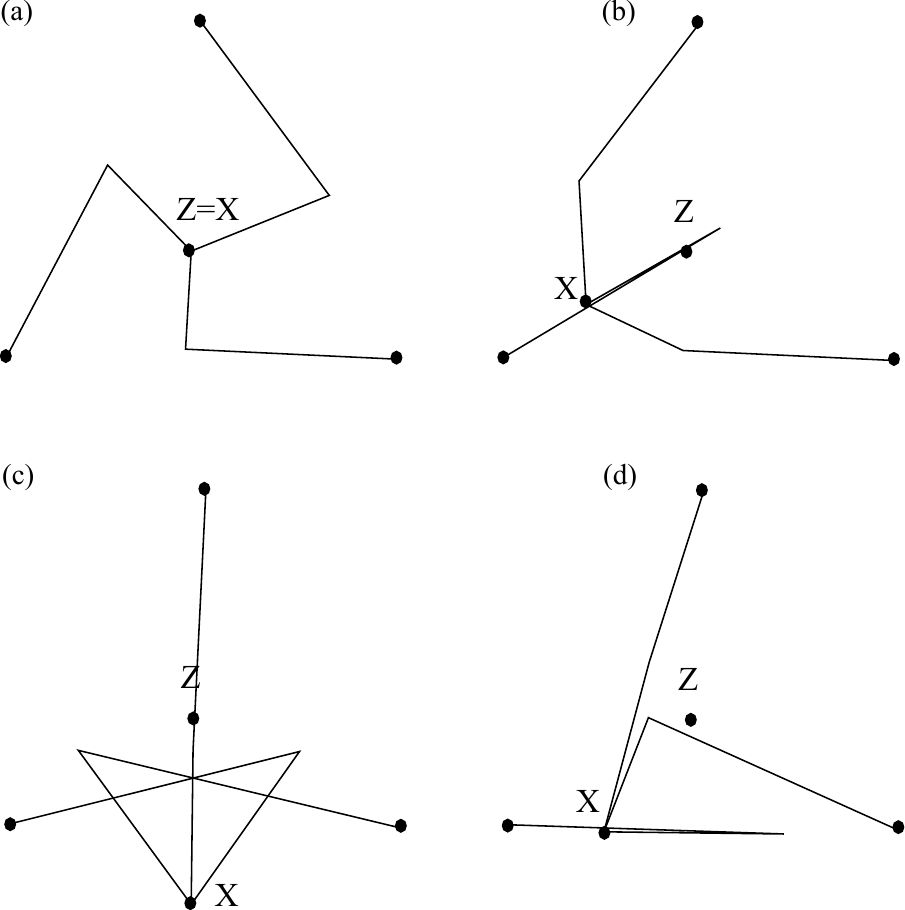}

\end{center}

\end{minipage}
\vspace{-0.5cm}
  \caption{ Work space and critical configurations.}
   \label{fig:tripodfig}
\end{figure}


The spider space $\mathcal{S}$  is a two-dimensional manifold. It is a patch of eight copies of the hexagonal work space, so its genus is $3$. See \cite{Mou} and Proposition \ref{p:eulerchar}.

Assume that $Z$ is positioned at the center of the triangle.The global minimum of $\mathbb{D}$ is attained when $X=Z$, which corresponds to eight different configurations.

The saddle points  are of two types: with one aligned leg ($12$ such points), and with two aligned legs (also $12$ ones).  

There are $12$ points where the maximum is attained: one leg is aligned.
We get $44$ isolated critical points and Euler characteristic $-4$. Other choices of $Z$ (e.g. outside the work space) will reduce the number of critical points. We will exploit this idea in the next example.

\subsection{Second Example}

We next compute Morse-Bott polynomials in the case of arbitrary number of edges for each leg. In this example we assume only two legs, with $p_1$, resp $p_2$ edges (more or less of equal length). The dimension of spider space is $p_1+p_2 -2$. We assume moreover that the work space is the region bounded by the two critical circles of maximum radius and does not contain arcs of other critical circles. We will consider two different choices of the point $Z$ (see Figures \ref{fig:second} and \ref{fig:exterior}).

The first choice is that the point $Z$ is in the interior of the work space. According to Theorem 2 there are 5 critical components,  related to the point $Z$ and the points $X_1,X_2,X_3,X_4$  in the figure. The singular components are as follows:
\begin{itemize}
\item Type  (2). For the points $X_3$ and $X_4$: each is just one point $\bullet$ i.e. an isolated singularity, clearly a maximum; 
\item Type(1). For the points $X_1$ and $X_2$: the polygon space of the two polygons, containing $X_1$, respectively $X_2$ and the points $A_2$, respectively $A_1$. Their Poincar\'e  polynomials can be computed by the formula given in \cite{FS}. But one can also see directly that they are homeomorphic to spheres $S^{p_2-2}$, respectively $S^{p_1-2}$ since the linkages  contain one very long  leg in comparison with the other legs (we omit the details). 

\item Type (0). For the point $Z$: the product of the two polygon spaces  $S^{p_2-2} \times S^{p_1-2}$. 
\end{itemize}
The indices and the contributions to the Morse-Bott  polynomial are contained in the table in Figure \ref{fig:second}.
\vspace{-0.3cm}

\begin{figure}[ht]
\begin{minipage}[t]{0.3\linewidth}
\centering
\includegraphics[
                 width=0.95\linewidth]{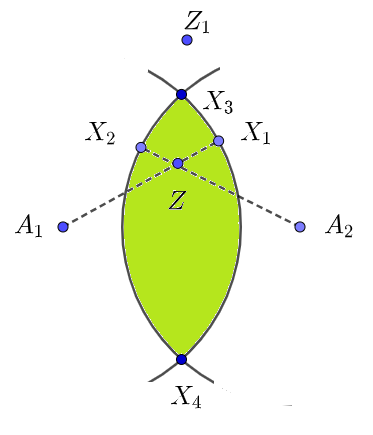}
\end{minipage}%
\begin{minipage}[t]{0.65\linewidth}
\vspace{-3.5cm}
\begin{tabular}{|c|ccc|}
\hline
            & $index$ & $\Sigma$ & Morse-Bott \\
            \hline
            $Z$ & $0$ & $S^{p_1-1}\times S^{p_2-1}$ & $(1-t^{p_1-1}) (1-t^{p_2-1 )})$\\
  $X_1$ & $p_1-1$ & $ S^{p_2-1}$ & $    (1-t^{p_2-1}) t^{p_1-1} $\\
    $X_2$ & $p_2 - 1$ & $S^{p_1-1}$ & $    (1-t^{p_1-1}) t^{p_2-1}$\\
      $X_3$ & $p_1+p_2-2$ & $\bullet$  & $  t^{p_1+p_2-2} $\\
        $X_4$ & $p_1+p_2-2$ & $\bullet$ & $t^{p_1+p_2-2}$\\
        \hline
  \end{tabular}
\end{minipage}
\vspace{-0.5cm}
\caption{$Z$ lies in the interior.}
\label{fig:second}
\end{figure}

 \noindent
It follows that the Morse-Bott polynomial can be written as:
$$ P_{\mathbb{D}}(t) =1 + (1+t)(t^{p_1-2}+t^{p_2-2}) + (1+t)^2 t^{p_1+p_2-4} + t^{p_1+ p_2 -2} .$$

As a consequence: The Euler characteristic is $1 + (-1)^ {p_1+ p_2 -2}$, which is 2 if $p_1+p_2$ is even and 0 else.
The Morse-Bott inequalities imply that several $b_i= 0$  and the others are bounded by 1 or 2. But we can do better by another choice of $Z$.
\medskip

The second choice is the point $Z_1$ is outside the work space and chosen in such a way that critical points of type (1) do not occur.
We have now only critical points of type (2), which are isolated:  one minimum and one maximum with Morse  polynomial $P_{\mathbb{D}}(t) = 1 +  t^{p_1+ p_2 -2}$. Indeed since any Morse function with two critical points can only occur on a sphere, we have in in our case $S^{p_1+p_2-2}$.

\begin{figure}[ht]
\begin{minipage}[t]{0.3\linewidth}
\centering
\includegraphics[
                 width=0.95\linewidth]{Example2aa}
\end{minipage}
\begin{minipage}[t]{0.65\linewidth}
\vspace{-4cm}
\begin{tabular}{|c|ccc|}
\hline
            & $index$ & $\Sigma$ & Morse-Bott \\
            \hline
  $X_3$ & $0$ & $\bullet$ & $1$\\
    $X_4$ & $p_1+p_2-2$ & $\bullet$  & $t^{p_1+p_2-2}$\\
  \hline
  \end{tabular}
\end{minipage}
\vspace{-1.5cm}
	\caption{$Z_1$ lies in the exterior.}
	\label{fig:exterior}
\end{figure}
NB. This answer can be checked in another way since our spider with 2 legs is also a polygon space.
We can also apply Proposition \ref{p:eulerchar} to compute the Euler characteristic. This gives: \\
$\chi(\mathcal{S}) = 2 - \chi(S^{p_1-1}) - \chi(S^{p_2-1}) + \chi(S^{p_1-1}) \chi(S^{p_2-1}) = 1 + (-1)^{p_1 +p_2 -2}.$

\medskip

We next consider a similar case, which includes a critical arc of non-extremal type. We get the table:

\begin{figure}[h!]
\begin{minipage}[t]{0.3\linewidth}
\centering
\includegraphics[
                 width=0.95\linewidth]{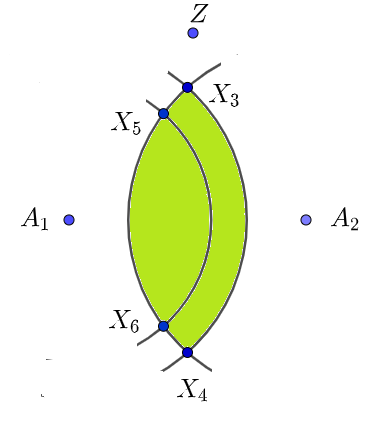}
\end{minipage}
\begin{minipage}[t]{0.65\linewidth}
\vspace{-4cm}
\begin{tabular}{|c|ccc|}
\hline
            & $index$ & $\Sigma$ & Morse-Bott \\
            \hline
  $X_3$ & $0$ & $\bullet$ & $1$\\
    $X_4$ & $p_1+p_2-2$ & $\bullet$  & $t^{p_1+p_2-2}$\\
      $X_5$ & $1$ & $\bullet$  & $t$\\
        $X_6$ & $p_1+p_2-3$ & $\bullet$ & $t^{p_1+p_2-3}$\\
        \hline
  \end{tabular}
\end{minipage}
\end{figure}

\FloatBarrier
  
 The Morse Bott polynomial  is $1 + t + t^{p_1+p_2-3} + t^{p_1 +p_2-2}$ \\
Via `Morse-Bott-inequalities' it follows that in this example the squared distance is a perfect Morse function ($p_1+p_2 \ge 6$).

\section{Hooke Energy}\label{s:hooke}

 This was our starting point  in this study of spiders. The case of 3 legs with two edges each has been studied before in \cite{KS}. It will turn out that weighted Hooke energy and squared distance give equivalent results.

\smallskip
\noindent
The Hooke Energy  potential $H : \mathbb{R}^2 \to \mathbb{R}$ is defined by
$  H(x) = \sum_{i=1}^n || x- a_i||^2 $ .\\
The stationary points of $H$ are determined by: 
$\nabla H = n x - 2 \sum_{i=1}^n || x- a_i||= 0$, so $x =\frac{1}{n} \sum_{i=1}^n a_i$, the centre of gravity.
There are no other stationary points in $\mathbb{R}^2$.
Note that $H(X) = n ||x-z||^2 +  \kappa,$ where $z=\frac{1}{n} \sum_{i=1}^n a_i$, the centre of gravity $Z$ and $\kappa= \sum_{i=1}^n ||a_i||^2 - n ||z||^2$. All level curves are circles.

\begin{proposition}\label{p:hooke}
The Hooke energy $H$ is an affine linear  transform of the squared distance function to the centre of gravity $Z$.
$$ H(x) = n|| x-z||^2 + \kappa .$$ The Hooke function is minimal in the centre of gravity.
Both functions have the same critical point theory.
\end{proposition}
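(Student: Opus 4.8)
The plan is to establish the displayed identity by a one-line expansion and then to observe that passing from $\mathbb{D}$ to (the composition with $\pi$ of) $H$ is an affine change with positive leading coefficient, which transfers verbatim all the Morse--Bott data obtained in Theorems~\ref{t:critical} and \ref{t:indices}. For the identity, I would expand each summand:
\[
H(x)=\sum_{i=1}^n\|x-a_i\|^2 = n\|x\|^2 - 2\,x\cdot\Big(\sum_{i=1}^n a_i\Big) + \sum_{i=1}^n\|a_i\|^2 .
\]
Since $z=\frac1n\sum_{i=1}^n a_i$ we have $\sum_{i=1}^n a_i = nz$; completing the square in the first two terms gives
\[
H(x)=n\big(\|x\|^2-2\,x\cdot z+\|z\|^2\big)+\Big(\sum_{i=1}^n\|a_i\|^2-n\|z\|^2\Big)=n\|x-z\|^2+\kappa = nD(x)+\kappa,
\]
with $\kappa$ the leftover constant. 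Because $n>0$ and $\kappa$ does not depend on $x$, the right-hand side attains its minimum exactly where $\|x-z\|=0$, i.e.\ at the centre of gravity $x=z$; equivalently $\nabla H(x)=2n(x-z)$ vanishes only there, which is the computation already carried out before the statement.

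For the claim that $H$ and $D$ have the same critical point theory, I would compose with the work map to obtain $\mathbb{H}:=H\circ\pi = n\,\mathbb{D}+\kappa$ on $\mathcal{S}$; thus $\mathbb{H}$ arises from $\mathbb{D}$ by the affine substitution $s\mapsto ns+\kappa$ with $n>0$. The routine invariances under such a substitution then give everything at once: $d_\xi\mathbb{H}=n\,d_\xi\mathbb{D}$, so $\mathbb{H}$ and $\mathbb{D}$ have the same critical set; on a normal slice the Hessian of $\mathbb{H}$ is $n$ times that of $\mathbb{D}$, so non-degeneracy and the transversal index of each connected component are unchanged, whence $\mathbb{H}$ is Morse--Bott exactly when $\mathbb{D}$ is, with the same critical manifolds and the same Morse--Bott indices; and $\{\mathbb{H}\le c\}=\{\mathbb{D}\le (c-\kappa)/n\}$, so the Morse--Bott polynomial, the Morse--Bott inequalities, and the value $\chi(\mathcal{S})$ extracted from either function coincide.

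There is essentially no obstacle in this argument; the only point deserving care is purely expository, namely to spell out precisely which ingredients of ``critical point theory'' are invariant under $f\mapsto af+b$ with $a>0$ --- the critical locus, the Morse--Bott property, the transversal Morse indices, and the resulting (co)homological consequences --- all of which hold because the first and second derivatives merely rescale by the positive factor $a$ while sublevel sets are reparametrised by an increasing affine function. (A small bookkeeping remark: the leftover constant is $\kappa=\sum_{i=1}^n\|a_i\|^2-n\|z\|^2$, which is exactly what the expansion produces.)
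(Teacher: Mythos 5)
Your proof is correct and follows essentially the same route as the paper, which simply expands the sum, completes the square, and observes that the affine substitution $f\mapsto nf+\kappa$ with $n>0$ leaves the critical locus, the Morse--Bott property, the transversal indices and the sublevel filtration unchanged. (As a minor bonus, your constant $\kappa=\sum_{i=1}^n\|a_i\|^2-n\|z\|^2$ is the correct one; the paper's displayed $\kappa$ omits the factor $n$ in front of $\|z\|^2$.)
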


Given weights $w = w_1,\ldots, w_n$ to the Hooke potential, we get a weighted Hooke potential
$$  H_w(x) =  \sum_{i=1}^n w_i || x- a_i||^2 .$$ 
The weighted centre of gravity $Z_w$ is defined by  $z_w=\frac{1}{\sum w_i} \sum_{i=1}^n w_i a_i$.
NB. We will allow also negative weights. We extend the definitions to spider space: 
$\mathbb{H}(\xi) = H(\pi(\xi)) $ and similarly $\mathbb{H}_w(\xi) = H_w(\pi(\xi)) $. 

\begin{theorem}\label{t:whooke}
Weighted Hooke Energy and Squared Distance to the weighted centre of gravity have equivalent critical point theory on spider space $\mathcal{S}$. 
\end{theorem}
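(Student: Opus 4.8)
The plan is to reduce the weighted statement to the unweighted squared-distance case treated in Theorems \ref{t:critical} and \ref{t:indices}, by completing the square exactly as in the preceding proposition. First I would write, for the weighted Hooke potential on $\mathbb{R}^2$,
$$ H_w(x) = \sum_{i=1}^n w_i\,\|x-a_i\|^2 = \Big(\sum_{i=1}^n w_i\Big)\|x\|^2 - 2\,x\cdot\Big(\sum_{i=1}^n w_i a_i\Big) + \sum_{i=1}^n w_i\|a_i\|^2, $$
and, assuming $W:=\sum_i w_i\neq 0$ (the degenerate case $W=0$ to be commented on separately — then $H_w$ is affine linear, with no critical points of the relevant type unless its linear part vanishes too), set $z_w := \frac{1}{W}\sum_i w_i a_i$, the \emph{weighted center of gravity}. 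Completing the square gives
$$ H_w(x) = W\,\|x - z_w\|^2 + \kappa_w, \qquad \kappa_w = \sum_{i=1}^n w_i\|a_i\|^2 - W\|z_w\|^2. $$
Thus on $\mathbb{R}^2$, $H_w$ is an affine linear transform (with multiplier $W$) of $D_{z_w}(x) = \|x - z_w\|^2$.

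Next I would pull this back along the work map: since $\mathbb{H}_w = H_w\circ\pi$ and $\mathbb{D}_{z_w} = D_{z_w}\circ\pi$, the same identity $\mathbb{H}_w = W\,\mathbb{D}_{z_w} + \kappa_w$ holds on the spider space $\mathcal{S}$. An affine transformation $f \mapsto Wf + \kappa_w$ with $W\neq 0$ does not change the critical set, does not change the critical submanifolds, and multiplies the Hessian on a normal slice by $W$. Hence $\mathbb{H}_w$ is Morse–Bott precisely when $\mathbb{D}_{z_w}$ is (i.e. under the strong genericity hypothesis on the pair (spider mechanism, $z_w$)), with the same critical manifolds, described as products of polygon spaces as in Theorem \ref{t:critical}. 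For the indices: if $W>0$ the normal Hessians have the same signature, so the Morse–Bott indices coincide with those computed in Theorem \ref{t:indices}; if $W<0$ (possible once negative weights are allowed), every normal Hessian changes sign, so each Morse–Bott index $\mu$ is replaced by $\operatorname{codim}(\Sigma) - \mu$, and maxima and minima are interchanged. In either case the Morse–Bott polynomials are related by the corresponding substitution, so the two functions have \emph{equivalent} critical point theory in the sense that one is obtained from the other by a global affine reparametrization of the target (and, when $W<0$, the orientation-reversing relabeling of indices).

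The main obstacle, and the only genuine subtlety, is bookkeeping around the sign of $W$ and the degenerate case $W=0$: the word ``equivalent'' must be read as ``equivalent up to an affine change of the value, hence with indices possibly complemented when $W<0$,'' and this should be stated explicitly so the theorem is not misread as ``identical indices.'' Beyond that, one must note that strong genericity for $\mathbb{H}_w$ is by definition the strong genericity of the pair $(\text{spider mechanism}, z_w)$ — the location of the weighted centroid depends on the $w_i$, so the bifurcation set in weight space is the preimage of the non-strongly-generic locus under $w\mapsto z_w$. With those caveats recorded, the proof is a one-line completion of the square followed by an appeal to Theorems \ref{t:critical} and \ref{t:indices}.
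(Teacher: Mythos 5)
Your proposal is correct and follows essentially the same route as the paper: complete the square to write $H_w = W\,\|x-z_w\|^2+\kappa_w$, pull back along the work map, and observe that a nonzero affine rescaling preserves the critical set while possibly dualizing indices (the paper records this only in its ``NB'' about negative weights). Your treatment is in fact slightly more careful than the paper's, since you isolate the degenerate case $W=\sum_i w_i=0$ and correctly identify that it is the sign of $W$, rather than of the individual $w_i$, that governs the index complementation.
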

\begin{proof}

We do a similar computation as above:
$$H_w(x) = (\sum_{i=1}^n w_i)  ||x-z_w||^2 +  \kappa_w$$
where   
$\kappa_w= \sum_{i=1}^n w_i ||a_i||^2 -(\sum w_i )||z_w||^2$.
Weighted Hooke Energy is therefore also an affine linear transform of the squared distance function.
\end{proof}
NB. Depending on the sign  of $w_i$ the indices of critical points can become dual.

\section{Voronoi distance}\label{s:voronoi}

\subsection{Critical points}

In this section we will consider on the generic smooth spider space the {\it Voronoi distance function} $\min_{i=1}^n||x-a_i||$. To smoothen it at the feet $A_i$ and get a nicer gradient without modifying the level sets we will in fact work with 
$$
V(x)=\min_{1 \le i \le n}||x-a_i||^2=\mathrm{dist}(x,\{A_1,\dots, A_n\})^2.
$$
This function is defined on the spider space $\mathcal{S}$, i.e. $\mathbb{V}(\xi):=V(\pi(\xi))$.
A major role is naturally played here by the {\it Voronoi diagram} of the feet $A_1,\dots, A_n\in{\Rz}^2$, i.e. the set 
$$
\mathcal{V}:=\{x\in{\Rz}^2\mid \exists i,j\colon i\neq j, \textrm{and}\ V(x)=||x-a_i||^2=||x-a_j||^2\}
$$
and its intersection with the work space: $\mathcal{V} \cap \mathcal{W}$. It is in fact the set of non-differentiability points of $V(x)$. The function itself being locally Lipschitz, has everywhere a well-defined Clarke subdifferential $\partial V(x)$ and thus the notion of critical point makes sense for $V(x)$, i.e. $x$ is critical iff $0\in\partial V(x)$ which is equivalent to saying that $x$ belongs to the convex hull of those feet $A_i$ that realize the distance $V(x)$. Otherwise, the point $x$ is called regular.

Note that $\mathbb{V}(\xi)$ is locally Lipschitz, too and thus we can carry over the notion of the Clarke subdifferential to this situation. This is intuitively clear, but needs a precise definition and extra explanation, see the Appendix. Note that $\mathbb{V}$ is differentiable outside $\pi^{-1}(\mathcal{V} \cap \mathcal{W})$.

\medskip

The critical point theory of $V(x)$ in the plane is well understood, see \cite{Dirk}. Obviously all the feet $A_i$ are global minima of the Voronoi potential $V(x)$. However, when moving to the spider space the situation presents some interesting features, the more so since there is some additional boundary information coming along.

\begin{figure}[ht]
\includegraphics[width=8cm]{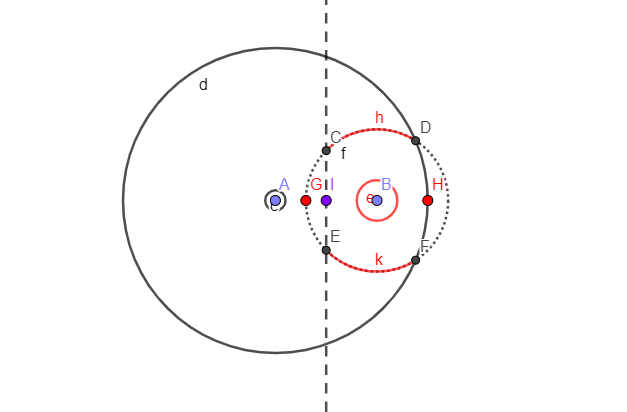}
\caption{The figure is self-explanatory: a two-legged spider with $\mathcal{W}$ being an annulus centered at $A$ and with several critical circles to be taken into account. The Voronoi diagram is the dashed line.}
\end{figure}

We first recall the Delaunay and Voronoi tessellation of the affine plane. There are several ways to define these. Here we choose the  \textit{void circle property} (vc):
A point $Y$ \textit{has the $k$-th vc-property} if there exist a circle with centre $Y$, which contains $k$ points from among  $A_1,\dots,A_n$ and no points from the feet in its interior. The centres of circles form the \textit{$k$-th void circle set}.  \textsl{We will assume as genericity condition that no points satisfy the 4-vc condition}, but remark that one can adapt the theory to a more general setting.

\smallskip
First we define the \textit{Delaunay tessellation} $DT(\mathcal{V})$: the 0-cells are the points $A_i$, the 1-cells are the 
edges which connect 
those vertices $A_i, A_j$ that are related to a point satisfying the 2-vc condition, the 2-cells are the cells characterized by triples related to points having the 3-vc property. Note that these  are closed sets.

\smallskip
Each foot $A_i$ defines a 2-dimensional Voronoi cell or {\it power cell} $$P(A_i):=\{x\in{\Rz}^2\mid ||x-a_i||^2\leq V(x)\},$$ its open part will be denoted by 
$P^{\circ}(A_i)$. The 1-dimensional Voronoi cells are the nonempty intersections $P(A_i) \cap  P(A_j)$ for $i\neq j$, the 0-cells are the triple intersections  $P(A_i) \cap  P(A_j) \cap P(A_k)$.
This tessellation is dual to the Delaunay tessellation.

\begin{theorem} [e.g. \cite{MS}]
Critical points (in the topological sense) of the Voronoi distance  function are precisely the intersections between a Delaunay-cell and its dual Voronoi cell. The index of the critical point is equal to the dimension of the Delaunay cell.
\end{theorem}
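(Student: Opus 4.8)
The plan is to combine the Clarke-subdifferential characterisation of criticality recalled above with the duality between the Voronoi and Delaunay tessellations, and then to extract the index from the local shape of the sublevel sets $\{V\le c\}=\bigcup_{i=1}^n\overline{D}(A_i,\sqrt c)$. Fix $x_0\in\Rz^2$, put $c_0=V(x_0)$, and let $I=I(x_0)=\{i:\ \|x_0-a_i\|^2=c_0\}$ be the set of nearest feet; by the $4$-vc genericity assumption $|I|\le 3$. Near $x_0$ one has $V=\min_{i\in I}\|x-a_i\|^2$, so $\partial V(x_0)=\operatorname{conv}\{2(x_0-a_i):i\in I\}$ and $x_0$ is critical iff $0\in\partial V(x_0)$ iff $x_0\in\operatorname{conv}\{A_i:i\in I\}$. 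The key combinatorial observation is that the set of points whose nearest-feet set is exactly $I$ is the relative interior of a cell $\sigma_I$ of the Voronoi tessellation of dimension $3-|I|$ (a power cell, a Voronoi edge, or a Voronoi vertex), while $\operatorname{conv}\{A_i:i\in I\}$ is precisely the dual Delaunay cell $\delta_I=\sigma_I^{\ast}$, of dimension $|I|-1$. Hence $x_0$ is critical $\iff x_0\in\operatorname{relint}\sigma_I\cap\delta_I$, so every critical point lies on the intersection of a Voronoi cell with its dual Delaunay cell; conversely $x_0\in\sigma_I\cap\delta_I$ forces $I\subseteq I(x_0)$ and $x_0\in\operatorname{conv}\{A_i:i\in I\}\subseteq\operatorname{conv}\{A_i:i\in I(x_0)\}$, i.e. $0\in\partial V(x_0)$. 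This settles the first assertion.

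For the index, argue by a local handle computation as $c$ crosses the critical value $c_0$. Write $m=|I|$, so $\dim\delta_I=m-1$. The $m$ circles $\partial D(A_i,\sqrt{c_0})$, $i\in I$, all pass through the common circumcentre $x_0$, and the centres $A_i$ span an $(m-1)$-simplex containing $x_0$ in its relative interior. Linearising $\|y-a_i\|^2$ at $x_0$ identifies, near $x_0$, the disc $\overline{D}(A_i,\sqrt c)$ with a half-plane whose inner normal is $A_i-x_0$ and whose boundary line is displaced from $x_0$ by an amount proportional to $c_0-c$. Since the normals $\{A_i-x_0:i\in I\}$ positively span an $(m-1)$-dimensional subspace, for $c<c_0$ the union of these $m$ half-planes omits near $x_0$ a small region that, as $c\uparrow c_0$, is: absent for $m=1$ (a new component --- a disc --- is born at $c_0$, a $0$-handle); a slab bounded by two circular arcs for $m=2$ (two local sheets of $\{V\le c\}$ touch and merge, a $1$-handle); a curvilinear triangle bounded by three arcs for $m=3$, i.e. a bounded complementary component that gets filled (a $2$-handle). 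In every case passing $c_0$ attaches one handle of index $m-1=\dim\delta_I$. Combined with the standard facts that $V$ has finitely many isolated critical values and is topologically regular between them --- sublevel sets deformation retract across any critical-value-free interval, via a pseudo-gradient built from the Clarke gradient, see the nonsmooth Morse theory of min-type functions in \cite{MS} and \cite{Dirk} --- this gives the index formula.

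The main obstacle is this index step: making ``a nondegenerate critical point of the min-type function $\min_i\|x-a_i\|^2$ contributes a single handle of index $m-1$'' precise, and checking its genericity hypotheses --- above all that the active feet $A_i$ form a nondegenerate simplex with $x_0$ in its relative interior, so that $0$ is an interior point of $\operatorname{conv}\{2(x_0-a_i):i\in I\}$. One must also dispose of the non-generic coincidences where the circumcentre falls on the boundary of $\sigma_I$ or of $\delta_I$ (a right-angled Delaunay triangle, or a foot on the perpendicular bisector of two others): these are measure-zero and can be excluded by strengthening the genericity assumption, and in any case the local handle analysis still returns the correct index after regrouping discs. A cleaner-looking but essentially equivalent alternative is to use the nerve lemma to replace $\{V\le c\}$ by the \v{C}ech / alpha complex of $\{A_1,\dots,A_n\}$ at scale $\sqrt c$ and to track which Delaunay simplex enters as $c$ passes a critical value.
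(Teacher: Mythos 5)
The paper itself gives no proof of this theorem: it is imported from \cite{MS} (see also \cite{Dirk}), so there is no in-paper argument to compare yours against. Your reconstruction is correct and follows the same route as those references: the Clarke criterion $0\in\partial V(x_0)\iff x_0\in\operatorname{conv}\{A_i:\ i\in I(x_0)\}$ (which the paper also states explicitly just before the theorem) yields the characterisation of critical points as intersections of a Voronoi cell with its dual Delaunay cell, and the local union-of-discs (equivalently, alpha-complex) analysis produces a single handle of index $|I|-1=\dim\delta_I$. The one step you leave at sketch level is the one you flag yourself: the deformation lemma asserting that sublevel sets of this min-type function change only at critical values, and there only by the local handle attachment; that is exactly the content of the nonsmooth Morse theory of \cite{MS} and \cite{Dirk}, so invoking it is legitimate rather than circular. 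One small caution on the $|I|=2$ case: the linearisation to half-planes is degenerate there (the two inner normals $A_i-x_0$, $A_j-x_0$ are antiparallel and the two circles are mutually tangent at $x_0$, not transverse), so the honest local model is $V-c_0=-2d|u|+u^2+v^2$ in coordinates adapted to the segment $A_iA_j$; this is a topological saddle and the conclusion (a $1$-handle) is unchanged, but the picture is two tangent discs merging rather than two half-planes closing a slab.
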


It follows that all the points $A_i$ are minima, the intersection between Voronoi-edges and Delaunay-edges are topological saddle points and the centers of 3-void-circles are maxima; we shall use the respective notations: $m,S,M$. These last two types of critical points, saddles and maxima, are contained in the Voronoi diagram $\mathcal{V}$.

\begin{lemma}\label{unique-cell}
For a configuration $\xi\in\mathcal{S}$ such that $X=\pi(\xi)\notin \mathcal{V}$, there exists exactly one power cell such that $X\in P^{\circ}(A_i)$ and thus $\mathbb{V}(\zeta)=||\pi(\zeta)-a_i||^2$ for $\zeta\in \pi^{-1}(P^{\circ}(A_i))$.
\end{lemma}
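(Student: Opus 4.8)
The plan is to read off everything from the definition of the power cells and the hypothesis $X = \pi(\xi) \notin \mathcal{V}$. First I would recall that by definition $P(A_i) = \{x \in \Rz^2 \mid \|x - a_i\|^2 \le V(x)\}$, and that $V(x) = \min_j \|x - a_j\|^2$, so $x \in P(A_i)$ exactly when $\|x - a_i\|^2 = V(x)$, i.e. when $A_i$ is (one of) the feet realizing the minimal squared distance to $x$. The interior $P^{\circ}(A_i)$ is then the set of $x$ for which $A_i$ is the \emph{unique} such foot: indeed, if $\|x - a_i\|^2 < \|x - a_j\|^2$ for all $j \ne i$, then by continuity this strict inequality persists on a neighbourhood of $x$, so $x$ is interior to $P(A_i)$; conversely, if $\|x - a_i\|^2 = \|x - a_j\|^2$ for some $j \ne i$, then $x \in P(A_i) \cap P(A_j)$, which is a $1$-dimensional (lower-dimensional) Voronoi cell, so $x$ cannot lie in the open $2$-cell $P^{\circ}(A_i)$. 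This is the identification $P^{\circ}(A_i) = \{x : \|x-a_i\|^2 < \|x - a_j\|^2 \text{ for all } j \ne i\}$.

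Next I would invoke the hypothesis. The Voronoi diagram $\mathcal{V}$ is precisely the set of $x$ for which two distinct feet $A_i, A_j$ tie for the minimal distance, i.e. $\mathcal{V} = \bigcup_{i \ne j}\bigl(P(A_i) \cap P(A_j)\bigr)$. Hence $X \notin \mathcal{V}$ means exactly that the foot realizing $V(X) = \mathrm{dist}(X, \{A_1, \dots, A_n\})^2$ is unique; call it $A_i$. By the identification above, $X \in P^{\circ}(A_i)$, and $X \notin P^{\circ}(A_k)$ for any $k \ne i$, since $X$ lies in the open cell of a foot only if that foot is the unique nearest one. This gives existence and uniqueness of the power cell with $X$ in its open part.

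For the last assertion, I would argue that $P^{\circ}(A_i)$ is an open subset of $\Rz^2$ on which $V(x) \equiv \|x - a_i\|^2$ (again by the pointwise characterization: on $P^{\circ}(A_i)$ the minimum defining $V$ is attained only at index $i$). Therefore $\pi^{-1}(P^{\circ}(A_i))$ is an open subset of $\mathcal{S}$ on which $\mathbb{V}(\zeta) = V(\pi(\zeta)) = \|\pi(\zeta) - a_i\|^2$ for every $\zeta \in \pi^{-1}(P^{\circ}(A_i))$, as claimed.

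The only subtle point — and the one I would state carefully rather than wave at — is the equality $\mathcal{V} = \bigcup_{i\neq j}(P(A_i)\cap P(A_j))$, i.e. that the stated definition of $\mathcal{V}$ in the text ($\exists i \ne j$ with $V(x) = \|x-a_i\|^2 = \|x-a_j\|^2$) coincides with the union of pairwise power-cell intersections; this is immediate from unwinding $V(x) = \min_j \|x - a_j\|^2$, but it is exactly what makes ``$X \notin \mathcal{V}$'' equivalent to ``the nearest foot is unique'', which is the whole content of the lemma. Everything else is a direct consequence of the continuity of $x \mapsto \|x - a_j\|^2$ and the openness of strict inequalities; no genericity assumption beyond the feet being pairwise distinct is needed here.
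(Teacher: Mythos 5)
Your proof is correct, and it is exactly the definitional unwinding the authors intend: the paper states Lemma \ref{unique-cell} without proof, treating it as an immediate consequence of the definitions of $V$, $\mathcal{V}$ and the power cells. Your careful identification of $P^{\circ}(A_i)$ with the locus of strict inequalities, and of $\mathcal{V}$ with the union of pairwise cell intersections, fills in precisely what the paper leaves implicit; nothing is missing.
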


 Since $\mathbb{V}$ is the composition of the work space map and the Voronoi Distance, it is good to consider its derivative
  $ d_\xi\mathbb{V}$, which is well-defined at a regular spider $\xi$  (i.e. with $\pi(\xi)\notin \mathcal{V}$)
  as a composition of
    $$  d_\xi\pi: T_{\xi}\mathcal{S} \to T_{\pi(\xi)} \mathbb{R}^2  \; \mbox{and} \;  d_xV: T_{\pi(\xi)} \mathbb{R}^2 \to \mathbb{R}.$$

\begin{proposition}[Composition of regular maps]
If $\xi\in\mathcal{S}$ is a regular point of the work map $\pi$,  and $V$ is regular at $X=\pi(\xi)$, then $\mathbb{V}$ is regular at $\xi$.
\end{proposition}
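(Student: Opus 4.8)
The plan is to reduce the statement to the classical fact that a composition of submersions is a submersion, with all the work going into making sense of "regularity of $V$ at $X$" in the non-smooth setting and into identifying the correct differential of $\mathbb{V}$ near $\xi$. First I would invoke Lemma \ref{unique-cell}: since $V$ is regular at $X=\pi(\xi)$ and in particular $X\notin\mathcal{V}$ (regular points of $V$ on the Voronoi diagram do not exist by the preceding discussion — the points of $\mathcal{V}$ are all critical), there is a unique foot $A_i$ with $X\in P^{\circ}(A_i)$, and on the open neighbourhood $\pi^{-1}(P^{\circ}(A_i))$ of $\xi$ the function $\mathbb{V}$ agrees with the smooth function $\zeta\mapsto\|\pi(\zeta)-a_i\|^2$. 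This replaces the Clarke subdifferential by an honest gradient on a neighbourhood of $\xi$, so the Clarke machinery is only needed to know that "critical/regular" for $V$ at such an $X$ coincides with the smooth notion, which is immediate because $\partial V(X)=\{\nabla\|x-a_i\|^2|_{X}\}=\{2(x-a_i)\}$ is a singleton there.

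Next I would write $d_\xi\mathbb{V}$ as the composition $d_XV\circ d_\xi\pi$, exactly as displayed before the statement, where now $d_XV=2(x-a_i)^{\mathsf T}$ is a genuine linear map $T_X\mathbb{R}^2\to\mathbb{R}$. Regularity of $V$ at $X$ means $0\notin\partial V(X)$, i.e. $x-a_i\neq 0$, so $d_XV$ is a surjection onto $\mathbb{R}$ (it is a nonzero linear functional on a $2$-dimensional space). Regularity of $\xi$ as a point of the work map $\pi$ means $d_\xi\pi\colon T_\xi\mathcal{S}\to T_X\mathbb{R}^2$ is surjective. A composition of two surjective linear maps is surjective, hence $d_\xi\mathbb{V}$ is surjective onto $\mathbb{R}$, i.e. nonzero; therefore $\xi$ is a regular point of $\mathbb{V}$. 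This is the entire content once the reduction is in place.

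The only genuine obstacle is bookkeeping about the non-differentiability locus: one must be sure that "$V$ regular at $X$" forces $X\notin\mathcal{V}$ so that $\mathbb{V}$ is actually differentiable at $\xi$ in the first place, and one must cite the Appendix for the claim that the Clarke subdifferential of $\mathbb{V}=V\circ\pi$ at such a $\xi$ is carried by $d_\xi\pi$ from that of $V$ at $X$ — but at a point where $V$ is smooth this is just the ordinary chain rule, so no subtlety remains. I expect the proof to be essentially two sentences after these reductions are stated.

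$$\textit{Proof.}\quad\text{By Lemma \ref{unique-cell}, on }\pi^{-1}(P^{\circ}(A_i))\text{ we have }\mathbb{V}(\zeta)=\|\pi(\zeta)-a_i\|^2,$$
so near $\xi$ the function $\mathbb{V}$ is smooth and $d_\xi\mathbb{V}=d_XV\circ d_\xi\pi$ with $d_XV=2(x-a_i)^{\mathsf T}$. Since $V$ is regular at $X$ we have $x\neq a_i$, so $d_XV$ is onto $\mathbb{R}$; since $\xi$ is a regular point of $\pi$, the map $d_\xi\pi$ is onto $T_X\mathbb{R}^2$. Hence $d_\xi\mathbb{V}$ is onto $\mathbb{R}$, i.e. $\mathbb{V}$ is regular at $\xi$. $\qed$
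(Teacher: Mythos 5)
There is a genuine gap in your reduction. You dispose of the non-smooth locus by asserting that regularity of $V$ at $X$ forces $X\notin\mathcal{V}$, ``since the points of $\mathcal{V}$ are all critical.'' This is false, and it contradicts the theorem quoted from \cite{MS} in this very section: the critical points of $V$ are precisely the intersections of Voronoi cells with their \emph{dual Delaunay cells}. Concretely, at a point $X$ on the Voronoi edge separating $A_i$ and $A_j$, the Clarke subdifferential $\partial V(X)$ is the segment joining $2(x-a_i)$ and $2(x-a_j)$, and it contains $0$ only when $X$ lies on the chord $[A_i,A_j]$. A generic point of a Voronoi edge is therefore \emph{regular} for $V$ in the Clarke sense while $V$ fails to be differentiable there. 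The proposition is invoked later precisely at such points (e.g.\ in the discussion preceding Proposition~\ref{p:vor-critcircle}, where one needs regularity of $\mathbb{V}$ above non-critical points of $\mathcal{V}$ away from critical circles), so the case you discard is the one that carries the content.

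Your argument is correct, and agrees with what is needed, on the open power cells: there $V$ is the smooth function $\|x-a_i\|^2$, regularity means $x\neq a_i$, and the composition of the two surjective differentials $d_XV\circ d_\xi\pi$ is nonzero. But to cover $X\in\mathcal{V}$ with $0\notin\partial V(X)$ you cannot use the ordinary chain rule; you need the subdifferential version. This is exactly what the paper's Lemma~\ref{two} provides: if $g$ is a submersion at $p$, then $0\in\partial(f\circ g)(p)$ implies $0\in\partial f(g(p))$, proved via Rademacher's theorem and the co-area formula by approximating with points where $f\circ g$ is differentiable. The proposition is its contrapositive. To repair your proof, either cite that lemma, or argue directly: any $\ell\in\partial\mathbb{V}(\xi)$ has the form $L\circ d_\xi\pi$ with $L\in\partial V(X)$ (formula $(*)$ in the Appendix); since $0\notin\partial V(X)$ every such $L$ is a nonzero functional, and $d_\xi\pi$ is surjective, so $\ell\neq 0$ and no convex combination of such $\ell$'s vanishes.
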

The proof is in the Appendix. This Proposition tells us that anyhow we have regularity of $\mathbb{V}$, provided $X$ is not one of the critical points of $V$ nor lies on a critical circle. We are left with  two cases for a critical point $\xi$ of $\mathbb{V}$:
\begin{enumerate}
\item $\xi$ projects to a critical point of $V$,
\item $\xi$ projects to a point lying on a critical circle.
\end{enumerate}

\medskip

{\bf Case (1): } Since $\pi$ is a proper mapping with continuously varying fibres, it is rather easy (see the Appendix)  to see that $0\in \partial V(\pi(\xi))$ implies $0\in \partial \mathbb{V}(\xi)$. This together with the previous observations allows us to state the next result whose proof is straightforward.
\begin{proposition} \label{p:V-sing}
Assume $X=\pi(\xi)$ is a critical point of $V$, then so is $\xi$ for $\mathbb{V}$. If moreover $X$ does not lie on a critical circle,
then $\pi^{-1}(X) $  is a smooth component of the critical set of $\mathbb{V}$ of codimension 2, satisfying the Morse-Bott property with transversal type, equal to the singularity type of $V$ at $X$ (minimum, saddle or maximum).
\end{proposition}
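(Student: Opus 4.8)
The plan is to reduce everything to the local structure of the work map $\pi$ near a fiber over a point $X$ that is a critical point of $V$ but lies on none of the critical circles $C(A_i,r_i^{\epsilon_i})$. First I would record that, by Lemma \ref{unique-cell} together with the definition of a critical point of $V$ and the genericity assumption (no $4$-vc points), the relevant feet realizing $V(X)$ are either one single $A_i$ (then $X=A_i$, a minimum $m$), exactly two feet $A_i,A_j$ with $X$ on the Voronoi edge between them (a saddle $S$), or exactly three feet with $X$ the center of a $3$-void circle (a maximum $M$); in the first case $X$ is a foot, but by strong genericity a foot can be an interior point of the work space, so $X$ still need not lie on any critical circle, and the argument is uniform. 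Next I would invoke the ``Composition of regular maps'' Proposition to conclude that, since $X$ lies on no critical circle, $\pi$ is a submersion at \emph{every} point of $\pi^{-1}(X)$; hence $\pi^{-1}(X)$ is a smooth submanifold of $\mathcal{S}$, and by Proposition \ref{p:product} (and its variant when $X=A_i$) it is diffeomorphic to a product of polygon spaces, so in particular it is a closed submanifold of codimension $2$ in $\mathcal{S}$.

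The heart of the argument is the transversal (normal-slice) analysis. Because $\pi$ is a submersion near $\pi^{-1}(X)$, there are local coordinates on $\mathcal{S}$ in which $\pi$ is the linear projection $(u,v)\mapsto v$ onto a $2$-dimensional factor identified with a neighborhood of $X$ in $\mathbb{R}^2$, and $\pi^{-1}(X)=\{v=0\}$ locally; the $v$-plane is exactly a normal slice to $\pi^{-1}(X)$. On this slice $\mathbb{V}$ restricts to $v\mapsto V(v)$, i.e.\ to the plane Voronoi distance function in a neighborhood of its critical point $X$. Now I would quote the plane critical point theory (\cite{Dirk}, \cite{MS}, stated above): at $X$ the function $V$ has the topological type of a nondegenerate minimum, saddle, or maximum according as $X$ is of type $m$, $S$, or $M$ — more precisely $V$ is, after a homeomorphism, the Morse normal form $v_1^2+v_2^2$, $-v_1^2+v_2^2$, or $-v_1^2-v_2^2$ near $X$, with a well-defined Clarke subdifferential containing $0$ and the stated local sublevel-set topology. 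Combined with the product structure $\mathbb{V}\approx V\oplus(\text{constant in }u)$ in the chosen coordinates, this says precisely that $\pi^{-1}(X)$ is a critical \emph{manifold} of $\mathbb{V}$ on which $\mathbb{V}$ is locally constant, with a nondegenerate transversal behavior of the stated index — i.e.\ the Morse–Bott property with transversal type equal to the singularity type of $V$ at $X$. The passage from $0\in\partial V(X)$ to $0\in\partial\mathbb{V}(\xi)$ for each $\xi\in\pi^{-1}(X)$ is the chain-rule inclusion $\partial\mathbb{V}(\xi)\subseteq (d_\xi\pi)^{*}\,\partial V(X)$ for the composition of a smooth submersion with a locally Lipschitz function; this is the content deferred to the Appendix, and I would simply cite it here.

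The main obstacle is bookkeeping the nonsmoothness cleanly: $V$ is only locally Lipschitz, so ``Morse–Bott with transversal type $m/S/M$'' must be interpreted in the topological (sublevel-set) sense of \cite{Dirk}, not via a genuine Hessian, and one has to make sure the product decomposition in the submersion chart is compatible with the Clarke subdifferential calculus — i.e.\ that $\partial\mathbb{V}$ along $\pi^{-1}(X)$ is, in those coordinates, exactly $\{0_u\}\times\partial V(X)$, so that $0\in\partial\mathbb{V}$ precisely on $\pi^{-1}(X)$ and nowhere nearby off it (the latter using again that $X$ is on no critical circle, so $d_\xi\pi$ stays surjective and $(d_\xi\pi)^{*}$ injective on a neighborhood). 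Once that identification is in place, the ``smooth component of the critical set of codimension $2$'' claim and the transversal-type claim both follow, and the proof is complete.
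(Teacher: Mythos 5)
Your proposal follows essentially the same route as the paper for the main (second) claim: because $X$ lies on no critical circle, $\pi$ is a submersion along the entire fiber $\pi^{-1}(X)$ --- though this comes from the proposition describing the critical locus and discriminant of $\pi$ (aligned legs project onto the critical circles), not from the ``composition of regular maps'' proposition, which concerns regularity of $\mathbb{V}$; that is a harmless mis-citation. In a submersion chart one then has $\mathbb{V}(u,v)=V(v)$, the fiber is the smooth codimension-$2$ product of polygon spaces from Proposition \ref{p:product}, the identity $\partial\mathbb{V}(u,v)=\{0\}\times\partial V(v)$ holds exactly because $\mathbb{V}$ is independent of $u$ in these coordinates, and the transversal type is the (topological) type $m$, $S$ or $M$ of $V$ at $X$ as classified in \cite{Dirk}, \cite{MS}. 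You also rightly flag that at saddles and maxima $V$ is only Lipschitz, so ``Morse--Bott'' must be read in the topological, sublevel-set sense. All of this matches the paper's intended argument.

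The one genuine slip concerns the first sentence of the proposition, which is asserted for \emph{every} critical point $X$ of $V$, including those lying on a critical circle, where your submersion chart is unavailable. There you invoke the chain-rule inclusion $\partial\mathbb{V}(\xi)\subseteq(d_\xi\pi)^{*}\,\partial V(X)$. This is the standard (easy) Clarke inclusion and it points the wrong way for your purpose: from $0\in\partial V(X)$ it does not follow that $0\in\partial\mathbb{V}(\xi)$, since $\partial\mathbb{V}(\xi)$ could a priori be a proper subset of the right-hand side that misses $0$. (That inclusion, combined with surjectivity of $d_\xi\pi$, yields the \emph{converse} implication --- criticality upstairs forces criticality downstairs, i.e.\ Lemma \ref{two}.) What the first claim actually requires is that the convex combination realizing $0\in\partial V(X)$ pulls back into $\partial\mathbb{V}(\xi)$, and this is precisely the content of Lemma \ref{three} in the Appendix, whose proof uses compactness of $\mathcal{S}$, openness of $\pi$ onto its image, and the resulting continuity of the fibers --- hypotheses that the naive chain rule does not see and that are genuinely needed at aligned configurations over $X$. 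Since you do defer to the Appendix, the repair is only to cite the correct statement; but as literally written, the inclusion you quote would not close the argument at points of $\pi^{-1}(X)$ where $\pi$ fails to be a submersion.
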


{\bf Case (2): }
 Inside the $i$-th power cell the function $V$ reduces to the distance to the unique foot $A_i$, and one applies Theorem \ref{t:critical} with $Z=A_i$. But that theorem assumes that the foot is different from $Z$ by the $(\star)$-condition. Therefore we have to consider two cases.
 On the open part of Voronoi cells of dimension 2 we consider in the work space all the non-void intersections with critical circles: $C(A_j, r_j^{\epsilon_j}) \cap P^{\circ}(A_i) \cap \mathcal{W}$. The case $i \ne j$ is covered by Theorem \ref{t:critical}, the case $i=j$ gives rise to non-isolated singularities in the work space, which are discussed in the Section \ref{ss:robotarm} about robot arms.
\begin{proposition}\label{p:open-cell}

 For a generic smooth spider space and the Voronoi distance $\mathbb{V}$, a configuration $\xi\in\mathcal{S}$ with $\pi(\xi) \ne A_j$ is critical if and only if one of the following conditions holds for $X=\pi(\xi)$ lying in the open part of the of 2-dimensional cell  $P^{\circ}(A_i)$ :
    \begin{enumerate} 
    \item in case $i \ne j$: If one leg is aligned, $X$ is the intersection point $C(A_j,r_j^{\epsilon_j}) \cap A_iA_j \cap \mathcal{W} $;
    \item in case $i \ne j$:   If two legs are aligned -- the points  $X$  of intersections  of 2 critical circles;
\item    in case $i=j$ we get all the intersections $C(A_i, r_i^{\epsilon_i}) \cap P^{\circ}(A_i) \cap \mathcal{W}$. These are generically arcs or circles.
    \end{enumerate}

\end{proposition}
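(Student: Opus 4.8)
The plan is to reduce the statement to the classification of critical points of the squared distance function $\mathbb{D}$ from Section \ref{s:sqmorse} (Theorem \ref{t:critical}), applied with the fixed point $Z$ taken to be the foot $A_i$, and then to read off the geometry case by case. First I would invoke Lemma \ref{unique-cell}: on the open saturated set $\pi^{-1}(P^{\circ}(A_i))$ the function $\mathbb{V}$ is literally equal to $\zeta\mapsto ||\pi(\zeta)-a_i||^2$, i.e. to $\mathbb{D}$ with $Z=A_i$. Since being critical is a local property, $\xi\in\pi^{-1}(P^{\circ}(A_i))$ is critical for $\mathbb{V}$ if and only if it is critical for this $\mathbb{D}$. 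I would also observe that the hypothesis $\pi(\xi)\ne A_j$ carries no content for $j\ne i$, because the open power cells are pairwise disjoint and $A_j\in P^{\circ}(A_j)$; its only effect is to exclude $X=A_i$, that is, to rule out the global minimum stratum $X=Z$ of Theorem \ref{t:critical}.

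It then remains to unwind the two surviving cases of Theorem \ref{t:critical} for $Z=A_i$. Assume first that no aligned leg of $\xi$ has its foot at $A_i$. Then the $(\star)$-condition holds under the standing genericity hypotheses, Theorem \ref{t:critical} applies verbatim, and its remaining cases translate as follows. Case (2), ``one leg $\mathcal{L}_j$ aligned, $Z$ on the line containing it'', says that the line through $A_j$ and $X$ passes through $A_i$; since $A_i\ne A_j$, this means $A_i,A_j,X$ are collinear, while alignment of $\mathcal{L}_j$ puts $X$ on a critical circle $C(A_j,r_j^{\epsilon_j})$, so $X\in C(A_j,r_j^{\epsilon_j})\cap A_iA_j\cap\mathcal{W}$ inside $P^{\circ}(A_i)$ — this is item (1). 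Case (3), ``two different legs aligned'', pins $X$ to a transversal intersection of the two corresponding critical circles — this is item (2).

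Finally I would treat the case in which the aligned leg is $\mathcal{L}_i$ itself; here the foot of that leg coincides with $Z=A_i$, so $(\star)$ fails and one cannot quote Theorem \ref{t:critical} directly — instead one argues as in its proof. Writing $d_\xi\mathbb{D}=d_X(||x-a_i||^2)\circ d_\xi\pi$, the differential $d_X(||x-a_i||^2)$ has kernel $(x-a_i)^{\perp}$, whereas at a configuration whose only aligned leg is $\mathcal{L}_i$ the image of $d_\xi\pi$ is exactly the tangent line at $X$ to the critical circle $C(A_i,r_i^{\epsilon_i})$, which is again $(x-a_i)^{\perp}$ (the remaining non-aligned legs impose no first-order constraint on the motion of $X$). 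Hence the composition vanishes, every such $\xi$ is critical, and the corresponding points of the work space sweep out $C(A_i,r_i^{\epsilon_i})\cap P^{\circ}(A_i)\cap\mathcal{W}$, a union of open arcs or whole circles; this is item (3), and it is precisely the non-isolated robot-arm situation of Section \ref{ss:robotarm} (Proposition \ref{p:robotarm}) with $A=A_i$. The point requiring the most care — rather than a real obstacle — is checking that the genericity assumptions indeed guarantee $(\star)$ when $i\ne j$ and that in the two-aligned-leg situation the two critical circles meet transversally, so that $d_\xi\pi$ has rank $0$; both follow from the smoothness conditions of Proposition \ref{p:smooth1}. One should also remark that configurations with two aligned legs lie in the closures of items (1)--(3) at once, which is consistent and explains the apparent overlap between the cases.
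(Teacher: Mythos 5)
Your proposal is correct and follows essentially the same route as the paper: restrict to the saturated open set $\pi^{-1}(P^{\circ}(A_i))$ where $\mathbb{V}$ coincides with the squared distance to $A_i$, apply Theorem \ref{t:critical} with $Z=A_i$ for the legs whose foot differs from $A_i$ (items (1) and (2)), and handle the failure of $(\star)$ when the aligned leg is $\mathcal{L}_i$ itself by the robot-arm analysis, which yields the non-isolated critical arcs or circles of item (3). Your explicit verification that the radial gradient of $||x-a_i||^2$ annihilates the tangent line to $C(A_i,r_i^{\epsilon_i})$ is a welcome elaboration of what the paper only sketches.
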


    \begin{remark}\label{r:open-cell}
      In part (1) of Proposition \ref{p:open-cell}  the set $ C(A_j, r_j^{\epsilon_j}) \cap A_iA_j\cap P^{\circ}(A_i) \cap \mathcal{W}$ consists of isolated points. The corresponding critical set in $\mathcal{S}$ has the Morse-Bott property. The index follows from Theorem \ref{t:indices} (2). Similarly, in (2) the Morse-Bott index follows from Theorem \ref{t:indices}(3). In case (3) 
      the intersections $C(A_i, r_i^{\epsilon_i}) \cap P^{\circ}(A_i) \cap \mathcal{W}$ consist of non-isolated pieces (arcs or circles). The transversal type at the intersection points is generically well defined and gives us locally the Morse-Bott  property with index as in Proposition \ref{p:robotarm} about robot arms. But in general the function does not have the global Morse-Bott property. 
 If $X=A_j$, then we are in Case (1): Morse-Bott with transversal minimum.
      
In the case of spiders where each leg has only 2 edges, the description becomes more transparent. See the Example in Figures \ref{fig:voronoi} and \ref{fig:voronoiCount}.
    \end{remark}

After studying the power cells  we study next what happens with $\mathbb{V}$ at the points of $\pi^{-1}(\mathcal{V} \cap \mathcal{W})$. We are left with the following two cases:
\begin{itemize}
\item the inverse images of the critical points of $V$, studied above in Proposition \ref{p:V-sing},
\item the inverse images of the intersections of critical circles with the Voronoi diagram.
\end{itemize}
and of course combinations of them.

\medskip
Assume a point $X$ belongs to $\mathcal{V}$, but is not critical for $V$. Let us introduce the following notation: if the two vectors $a-x$ and $b-x$ are linearly independent, then we say that the corresponding points $A,X, B$ define the angle $\angle AXB$ at $X$ defined to be the acute one. To this angle corresponds an affine cone denoted by $<AXB>$ obtained as the union of all those affine lines through $X$ that meet the segment $AB$. If now $X$ lies on a circle $\mathcal{K}$ centred at a point $C\neq A,B$, then we have the obvious observation that the existence of a point $D$ in the segment $AB$ for which $XD$ is perpendicular to the tangent line to $\mathcal{K}$ at $X$ is equivalent to $XC$ defining a line from the cone $<AXB>$, i.e. $XC$ lying inside this cone. We will use this in the next Proposition.
\begin{proposition}\label{p:vor-critcircle}
Assume  a point $X \in \mathcal{V}$ lies on a single critical circle $C(A_k,r)$ and on the Voronoi edge of $A_iA_j$, but is not a critical point of $V$. Let $<A_iXA_j>$  be the affine cone around the angle $\angle A_iXA_j$, then 
\begin{enumerate}
\item  Assume $ i\ne k \ne j$ then
\begin{itemize}
\item If $XA_k$ lies in the interior of $<A_iXA_j>$ then $\pi^{-1} (X)$ is a critical component of $\mathbb{V}$, with transversal Morse index, equal to the Morse index of $V$ at $X$,
\item  If $XA_k$ lies in the exterior of $<A_iXA_j>$ then $\pi^{-1} (X)$ does not contain critical points, in fact $\mathbb{V}$ is topologically regular near $\pi^{-1} (X)$,
\end{itemize}

\item Assume $ i\ne k = j$ then $\mathbb{V}$ is singular at $\pi^{-1} (X)$ and a limit of the case (3)  in Proposition  \ref{p:open-cell}.
\end{enumerate}
\end{proposition}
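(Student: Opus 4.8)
The plan is to localize the analysis near the fiber $\pi^{-1}(X)$ and reduce everything to the already-established machinery. First I would fix a small neighbourhood $U$ of $X$ in $\mathcal{W}$ such that $U$ meets only the two power cells $P(A_i)$ and $P(A_j)$ and only the single critical circle $C(A_k,r)$; this is possible by the genericity hypothesis (no $4$-vc points, $X$ not critical for $V$, $X$ on exactly one critical circle). On $U$ the Voronoi function $V$ is the minimum of the two smooth functions $\|x-a_i\|^2$ and $\|x-a_j\|^2$, glued along the Voronoi edge of $A_iA_j$. Since $X$ is not a critical point of $V$, the differentials $d_X\|x-a_i\|^2$ and $d_X\|x-a_j\|^2$ both point ``to the same side'' of the edge in the sense that $0$ is not in the convex hull of $\{x-a_i, x-a_j\}$; equivalently, the common outward gradient direction of $V$ along the edge is a well-defined nonzero vector $v$, which is (up to scaling) the direction bisecting $A_iX$ and $A_jX$ and pointing away from the $A_iXA_j$ side. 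This is exactly the geometric content that makes the cone $<A_iXA_j>$ relevant.

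Next I would pull this picture back by $\pi$. On $\pi^{-1}(U)$ the function $\mathbb{V}$ is $\min(\mathbb{D}_i,\mathbb{D}_j)$ where $\mathbb{D}_i=\|\pi(\cdot)-a_i\|^2$, and the analysis splits according to which leg is aligned. If no leg is aligned over $U$, then $\pi$ is a submersion there, and by the Composition of regular maps Proposition $\mathbb{V}$ is regular on all of $\pi^{-1}(U)$ except possibly where $V$ fails to be ``regular'' — but $V$ is topologically regular at $X$ by assumption, so there are no critical points of $\mathbb{V}$ from this source. The only critical points of $\mathbb{V}$ in $\pi^{-1}(U)$ therefore come from the $k$-th leg being aligned, i.e. from $\pi^{-1}(X)\subset\pi^{-1}(C(A_k,r))$. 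Over such an $X$, the tangent image $d_\xi\pi(T_\xi\mathcal{S})$ is the $1$-dimensional line tangent to $C(A_k,r)$, call its orthogonal complement (the radial direction $\mathbb{R}(x-a_k)$) the ``normal'' direction $w$. Whether $\xi$ is critical for $\mathbb{V}=\min(\mathbb{D}_i,\mathbb{D}_j)\circ$(local chart) is governed by whether the directional derivative of $\mathbb{V}$ vanishes along the critical-circle direction, and this is a one-variable min of two smooth functions.

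Now the case split. In case (1), $k\ne i,j$, so near $X$ both $A_i$ and $A_j$ are at positive distance and $V$ is genuinely the min of the two distinct smooth branches along the edge, with common edge-gradient $v$. The restriction of $\mathbb{V}$ to a normal slice to $\pi^{-1}(X)$ ``sees'' $V$ restricted to $C(A_k,r)$; the derivative of $V|_{C(A_k,r)}$ at $X$ is the projection of $v$ onto the tangent of $C(A_k,r)$, and this vanishes precisely when $v$ is radial, i.e. when $XA_k$ lies along the bisecting direction — which is the condition ``$XA_k$ interior to $<A_iXA_j>$'' versus ``exterior''. If interior, $v$ and $w$ are (generically) non-orthogonal-to-tangent in a way that makes $X$ a true critical point of the restricted problem, the fiber $\pi^{-1}(X)$ is a critical component, and its transversal Hessian is computed exactly as in the proof of Theorem \ref{t:indices}(2)/(3), yielding the Morse index of $V$ at $X$ (the aligned-leg directions contribute the appropriate signs $\pm\xi_{k,s}^2$, and the min with the other branch does not add normal directions because the two branches agree to first order along the edge and the edge direction is transverse to the slice). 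If $XA_k$ is exterior, then $V|_{C(A_k,r)}$ is strictly monotone through $X$, so $\mathbb{V}$ has no critical point in $\pi^{-1}(X)$ and is topologically regular nearby. In case (2), $k=j$: now the critical circle $C(A_j,r)$ belongs to the same foot whose power cell is one of the two sharing the edge, so inside $P^{\circ}(A_j)$ the function $V$ is $\|x-a_j\|^2$ and the critical circles of $A_j$ produce the non-isolated arcs/circles of case (3) of Proposition \ref{p:open-cell}; as $X$ approaches the Voronoi edge these arcs accumulate and $\mathbb{V}$ is singular along $\pi^{-1}(X)$, exactly as a boundary limit of that case — this is the assertion to be made, and it follows by continuity of the fibers and the description already given in Remark \ref{r:open-cell}.

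The main obstacle I anticipate is the bookkeeping in case (1): verifying rigorously that the transversal Hessian of $\mathbb{V}$ along $\pi^{-1}(X)$ is nondegenerate with the claimed index even though $\mathbb{V}$ is only the minimum of two smooth functions. The point to nail down is that, because $X$ is not critical for $V$, the two smooth branches $\mathbb{D}_i$ and $\mathbb{D}_j$ have a common nonzero first-order part along the Voronoi wall, so on a normal slice to $\pi^{-1}(X)$ one of them strictly dominates except along a codimension-$\geq 1$ locus, and the Morse-Bott computation reduces to that of a single smooth branch — at which point the argument of Theorem \ref{t:indices} applies verbatim with $Z=A_i$ (or $A_j$). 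Care is needed to confirm that the ``wall'' direction is transverse to the slice (so that it does not contribute a degenerate normal direction), which is precisely where the hypothesis that $X$ is not a critical point of $V$ enters again.
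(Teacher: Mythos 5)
Your overall strategy (localize near $\pi^{-1}(X)$, treat $V$ on the two adjacent power cells as a min of two smooth branches, and reduce criticality to a one-dimensional condition along the critical circle) is in the same spirit as the paper, which works with the Clarke subdifferential and the factorization $\ell = L\circ d_\xi\pi$ with $L\in\partial V(X)$. But the key step of part (1) — the interior/exterior dichotomy — is derived incorrectly. You assert that ``the derivative of $V|_{C(A_k,r)}$ at $X$ is the projection of $v$ onto the tangent of $C(A_k,r)$'' for a single well-defined vector $v$ (the bisecting/edge direction), and that it ``vanishes precisely when $v$ is radial.'' That treats $V$ as differentiable at $X$, which it is not: the circle meets the Voronoi edge transversally, so $V|_{C(A_k,r)}$ has a genuine corner at $X$, and its subdifferential there is the whole interval of projections of the segment $\partial V(X)=\mathrm{conv}\{2(x-a_i),\,2(x-a_j)\}$ onto the circle tangent. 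Criticality means $0$ lies in that interval, i.e.\ some $x-a$ with $a\in\mathrm{conv}\{A_i,A_j\}$ is orthogonal to the tangent of $C(A_k,r)$ — equivalently the radial line $XA_k$ meets the segment $A_iA_j$, which is exactly the open cone condition of the proposition. Your criterion ``$v$ is radial'' is a codimension-one condition (it singles out the bisector direction only), and the subsequent identification ``$XA_k$ lies along the bisecting direction $=$ $XA_k$ interior to $<A_iXA_j>$'' is simply false; as written your argument would miss all critical fibers for which $XA_k$ is interior to the cone but not along the bisector. This is the heart of the statement and needs the full subdifferential segment, as in the paper's proof via formula $(*)$ of the Appendix.

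A secondary problem: in your closing paragraph you claim that on a normal slice ``one of [the branches] strictly dominates'' so the index computation ``reduces to that of a single smooth branch.'' In the interior case this is exactly what does not happen — the two branches exchange roles across the wall, the critical point of $V|_{C(A_k,r)}$ is a corner-type (topological) extremum rather than a smooth one, and the transversal index must be read off in the topological sense (as for the saddle points of $V$ in the van Manen--Siersma picture), not from the Hessian of one branch. Your treatment of the regular locus (via the composition-of-regular-maps proposition) and of case (2) as a limit of the non-isolated circle singularities is consistent with the paper, which itself leaves those parts terse.
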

\begin{proof}
We follow the proof of Theorem \ref{t:critical}; by the formula $(*)$ in the Appendix, any $\ell\in \partial \mathbb{V}(\xi)$ is a composition of 
    $$  d_\xi\pi: T_{\xi}\mathcal{S} \to T_{\pi(\xi)} \mathbb{R}^2  \; \mbox{and} \;  L\in \partial V(\pi(\xi)).   $$
    
Here $L$ is a submersion (see Remark \ref{submersion} in the Appendix). 
     Let  $ i\ne k \ne j$. If $\xi$ does not belong to the critical locus of $\pi$ then we get the composition of two submersions, so $\mathbb{V}$ is regular at $\xi$ (see also Lemma \ref{concluding} in the Appendix). As soon as $\xi$ has exactly one aligned leg, then the image of $d_\xi\pi$ is one-dimensional. The normal to the tangent line to the critical circle at $X$ must be contained in the cone   $<A_iXA_j>$  (which is actually related to the Clarke subdifferential)  to get a critical point of $\mathbb{V}$.

The case $ i\ne k = j$ is left to the reader.
\end{proof}

Combinations of these cases can be avoided by genericity.
We consider a very strong type of genericity,  adapted to the Voronoi diagram: \textit{strongly Voronoi genericity} which means:
\begin{itemize}
\item  we have a {\it strongly generic} spider space,
\item the  critical points of $V$ do not lie on critical circles,
\item there are no 4-vc critical points,
\item no configuration with the body lying on the Voronoi set has two aligned legs.
\end{itemize}

Taking into account all the previous discussion, we may summarize the results  of Propositions \ref{p:V-sing},  \ref{p:open-cell} and \ref{p:vor-critcircle} in the following form:

\begin{theorem}\label{VorCrit}
 For a strongly Voronoi generic spider space and the Voronoi distance $\mathbb{V}$, a configuration $\xi\in\mathcal{S}$ is critical if and only if one of the following conditions hold for $X=\pi(\xi)$:
 \begin{enumerate}
\item In the case when there are no aligned legs: $X$ is a critical point of the Voronoi distance $V$,
\item In the case of one aligned leg: 
\begin{enumerate}
\item in an open power cell \\ -- $X$ is the intersection point $C(A_j,r_j^{\epsilon_j}) \cap A_iA_j \cap \mathcal{W} $ if $i \ne j$,and \\ -- $X$  lies on the critical circle if $i=j$ (generically arcs or the full circle),
\item on the Voronoi diagram -- if X satisfies the condition: $XA_k$ lies in the interior of $<A_iXA_j>$ ,

\end{enumerate}

\item in case of two aligned legs: $X$ is the intersection point of critical circles.

\end{enumerate}
\end{theorem}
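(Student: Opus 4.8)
The plan is to assemble Theorem \ref{VorCrit} as a bookkeeping synthesis of the three propositions already proved, organized by the two-step analysis of $\mathbb{V}=V\circ\pi$: first whether $X=\pi(\xi)$ lies on the Voronoi diagram $\mathcal{V}$, then whether $X$ lies on a critical circle. The starting observation is the contrapositive furnished by the "Composition of regular maps" proposition: if $\xi$ is a regular point of $\pi$ and $V$ is regular at $X$, then $\mathbb{V}$ is regular at $\xi$. Hence a critical $\xi$ forces either $X$ to be a critical point of $V$, or $\xi$ to lie in the critical locus of $\pi$, i.e.\ $X$ lies on some critical circle $C(A_i,r_i^{\epsilon_i})$; and the two may coexist. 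Strong Voronoi genericity is exactly what is needed to rule out the coexistences that are not listed (no critical point of $V$ on a critical circle, no two-aligned-leg configuration over a Voronoi point, no 4-vc points), so the case list is forced to be mutually exclusive.

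Next I would walk through the three regimes. \emph{No aligned legs:} then $\xi$ is a regular point of $\pi$, so criticality of $\mathbb{V}$ is equivalent to criticality of $V$ at $X$ by the composition proposition together with Proposition \ref{p:V-sing} (the latter handling the "if" direction, namely that criticality of $V$ lifts through the proper submersion $\pi$); this is item (1). \emph{One aligned leg:} here $X$ lies on exactly one critical circle $C(A_j,r_j^{\epsilon_j})$ and we split on whether $X\in\mathcal{V}$. If $X\notin\mathcal{V}$, Lemma \ref{unique-cell} puts $X$ in a unique open power cell $P^\circ(A_i)$, on which $V$ reduces to $\|x-a_i\|^2$, and Proposition \ref{p:open-cell} gives the two subcases $i\ne j$ (the isolated intersection $C(A_j,r_j^{\epsilon_j})\cap A_iA_j\cap\mathcal{W}$) and $i=j$ (the whole arc/circle), which is item (2a). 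If $X\in\mathcal{V}$, strong Voronoi genericity says $X$ is not a critical point of $V$ and lies on a single critical circle and a single Voronoi edge $A_iA_j$, so Proposition \ref{p:vor-critcircle} applies: criticality holds precisely when $XA_k$ points into the cone $\langle A_iXA_j\rangle$, which is item (2b). \emph{Two aligned legs:} then $X$ is an intersection of two critical circles; by strong genericity it is not on $\mathcal{V}$ and not a critical point of $V$, so Proposition \ref{p:open-cell}(2) (equivalently Theorem \ref{t:critical}(3) via $Z=A_i$ in the relevant open power cell) gives item (3).

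Finally I would note that the list is exhaustive: any critical $\xi$ has at most two aligned legs (smoothness), the cases "$0$, $1$, $2$ aligned legs" partition the possibilities, and within each case the sub-possibilities have been shown to be the only ones, with all off-list coincidences excluded by strong Voronoi genericity. I expect the only real subtlety, and the main obstacle, to be the one-aligned-leg-on-the-Voronoi-diagram case: one must be careful that the Clarke subdifferential $\partial V(X)$ — which is the segment spanned by the two directions $XA_i$, $XA_j$ — meets the normal line to the tangent space of the critical circle exactly when $XA_k$ lies in the cone $\langle A_iXA_j\rangle$, and that this transversality statement on the $\mathbb{R}^2$ side lifts correctly through $d_\xi\pi$ via formula $(*)$ of the Appendix; but this is precisely the content of Proposition \ref{p:vor-critcircle}, so at the level of Theorem \ref{VorCrit} it is invoked rather than reproved. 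The remaining work is purely organizational, so the "proof" is short: cite the composition proposition for the only-if skeleton, then Propositions \ref{p:V-sing}, \ref{p:open-cell}, \ref{p:vor-critcircle} for the three regimes, and invoke strong Voronoi genericity to guarantee the cases do not overlap.
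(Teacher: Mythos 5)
Your proposal is correct and follows essentially the same route as the paper: the paper itself presents Theorem \ref{VorCrit} merely as a summary of Propositions \ref{p:V-sing}, \ref{p:open-cell} and \ref{p:vor-critcircle}, with the composition-of-regular-maps proposition supplying the ``only if'' skeleton and strong Voronoi genericity eliminating the overlapping cases, exactly as you organize it. Your case analysis (zero, one, or two aligned legs, split further by whether $X$ lies on the Voronoi diagram) and your identification of case (2b) as the only delicate point match the paper's intent precisely.
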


Figure \ref{fig:voronoi} illustrates the general situation.
\begin{figure}[h]
\includegraphics[width=8cm]{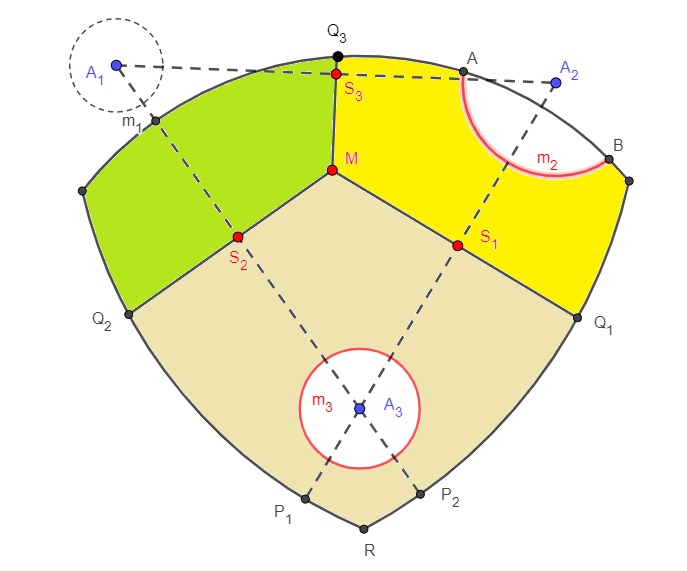}
\caption{The Voronoi diagram is an inverse $Y$-fork. Its trace $\mathcal{V} \cap \mathcal{W}$ on the work space gives three cells. We can see inside $\mathcal{W}$ three saddle points $s_i$, one maximum $M$, non-isolated minima on the critical circles $m_2$, $m_3$ while the first critical circle lying outside the work space contributes only by one point. Moreover, there are additional local maxima $Q_1, Q_2, Q_3$ and $R$, while $P_1, P_2$ are additional saddle points. Inside each cell the theory is covered by Theorem \ref{t:critical}.}
\label{fig:voronoi}
\end{figure}


\subsection{Morse-Bott type polynomials for the Voronoi distance}
Generating polynomials for singularities of functions have been studied by describing the changes in topology when passing a critical level. This is well understood in the case of Morse functions in the isolated case and Morse-Bott functions in the non-isolated case. For other types of (transversal) singularities and other critical sets there should exist anyhow generalisations. In the preceding section we saw that for the Voronoi distance complicated situations can occur, e.g. non-isolated singularities in work space (even not necessarily smooth) will get us outside the theory of Morse-Bott functions.

We propose to compute a Morse-Bott polynomial by using a small perturbation so that we avoid non-isolated singularities in the work space. This perturbation will act as a  `Morsification'.

Take points  $\bar{A_i}$ near to $A_i$ and consider for the same spider the function
$$
\bar{V}(x)=\min_{1 \le i \le n}||x-\bar{a_i}||^2=\mathrm{dist}(x,\{\bar{A_1},\dots, \bar{A_n}\})^2,
$$
and its lifting to the spider space.
The new Voronoi diagram will be denoted by $\bar{\mathcal{V}}$.

For generic positions of the points $\bar{A_i}$  the statements of Theorem \ref{VorCrit} with only isolated intersections are applicable.
More precisely: in the work space the non-isolated contributions are replaced by the intersections of $A_i\bar{A_i}$ with the critical circles.
As a result $\bar{\mathrm{V}}$ becomes a Morse-Bott function, where the index can be computed via Theorem \ref{t:indices}  for  $\mathbb{D}=||x-z||^2$.

\begin{figure}[ht]
\includegraphics[width=8cm]{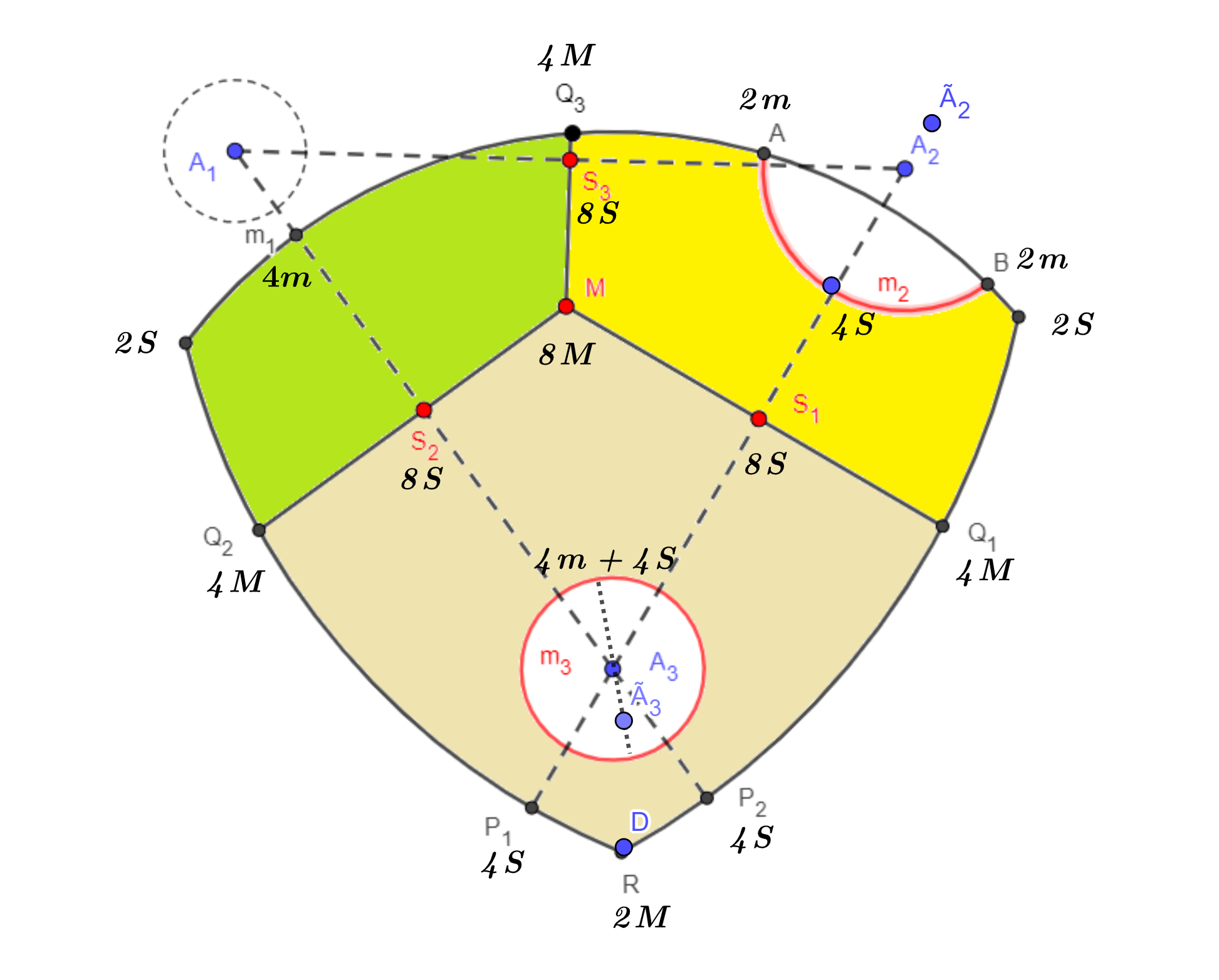}
\caption{Counting critical points in the example of Figure \ref{fig:voronoi}; isolated in blue; in red the contributions from the approximation. }
\label{fig:voronoiCount}
\end{figure}

\vspace{-0.3cm}
Let us show what happens in the tripod spider example of Figure \ref{fig:voronoi} with only 2 edges for each leg.
Note that  $\chi({\mathcal{S})} = -10$.  
The isolated critical points minima $[m]$, saddles $[S]$  and maxima $[M]$  are indicated in Figure \ref{fig:voronoiCount} combined with the covering degree. We write $4[m]  + 36 [S] + 22 [M]$  as their formal sum, which
contributes to the Morse polynomial by   $4 + 36 t + 22 t^2$. Take a point $\bar{A_3}$  near to $A_3$. The critical circle is replaced by 2 critical points $4[m] + 4[S]$, which contribute $4 +4t$. Next consider 
$\bar{A_2}$  near to $A_2$ (e.g. on the interval $A_2A_3$). The critical arc  is now replaced by a minima and two corner points:
$4[m] + 4[S]$,  which contributes $4 + 4t$ to the Morse polynomial of the perturbed Voronoi-distance. 
This resulting Morse polynomial is:  $12 + 44 t + 22 t^2$. Note that the Morse polynomial depends on the chosen Morsification. E.g. other positions of $\bar{A_2}$ can give contribution $2+2t$ instead of $4 + 4t$.

\section{Concluding remarks}\label{s:cremarks}

Several generalizations of the critical point theory of the squared distance are possible. 

\subsection{Higher dimensional ambient space}
A next step could be the extension to spiders in $d$-dimensional space. The spider space, especially its smoothness has been considered by several authors, as mentioned in the introduction. Very few is known  about the topology of spider spaces, except in the case of an \textit{ n-arms machine}: each leg has 2 edges of equal length and the feet lie at the vertices of a regular polygon in the plane. Kamiyama and Tsukuda \cite{KT}  determined the homotopy type of the spider space of this kind.
An essential tool was the theory of co-limits. 

\smallskip
\noindent
In the general $d$-dimensional case the Morse theoretic approach via the squared distance function could be an interesting project.

\subsection{Graph-Linkages}

A next extension could  be other types of graph-linkages. A first example is a tripod spider platform, consisting of a  triangular platform, where each of the vertices is connected by a robot arm with a fixed feet (and similar for a massive n-gon platform).
The paper \cite{SSB2} contains results about the smoothness of the configuration space and Morse results for tripod spider platforms in the plane and legs with only two edges.

\subsection{Regions with stratified differential boundary}

A simpler problem, which is directly related to our spider study is to consider a region in the plane with stratified differentiable boundary together with the quadratic distance to a point in the plane.  In the case of planar spiders the (stratified) critical points of quadratic distance coincide with the images of the singular sets (via work map) of quadratic distance on spider space.
In \cite{KS} this was used to treat Hooke energy for tripods in the plane.

\section{Appendix}

In this appendix we treat  the definition and certain properties of the Clarke subdifferential on a smooth manifold. They look intuitively clear,
but need some careful attention.

\medskip

We noted that $\mathbb{V}(\xi)$ is locally Lipschitz and thus we can carry over the notion of the Clarke subdifferential to this situation. Clearly, $\mathbb{V}$ is differentiable outside $\pi^{-1}(\mathcal{V} \cap \mathcal{W})$.

From the general point of view, given a $\mathcal{C}^1$-smooth submanifold $M\subset{\Rz}^N$ of dimension $m$ and a locally Lipschitz function $h\colon M\to \mathbb{R}$, we obviously have that $h$ is almost everywhere differentiable along $M$ (w.r.t. the usual $m$-dimensional Hausdorff measure; it is enough to compose $h$ with local parametrizations and invoke the Rademacher Theorem). Therefore, for a point $a\in M$, we may repeat the classic Clarke approach by taking the convex hull, denoted again by $\partial h(a)$, of all the possible limits of gradients $\nabla h(x_\nu)$ when $x_\nu$ are points at which $h$ is differentiable and that tend to $a$. Here, $\nabla h(x)\in T_x M$ is the tangential component of the gradient $\nabla H(x)$ of any differentiable extension of $h$ onto a neighbourhood of $a$ (it does not depend on the choice of $H$). Note that the tangent spaces vary continuously, so that $\partial h(a)\subset T_a M$. This tangential gradient coincides with the unique $v\in T_xM$ such that $h(x')=h(x)+\langle v, \rho(x-x')\rangle+o(||x-x'||))$ where $\rho\colon \mathbb{R}^N\to T_xM$ is the orthogonal projection, while the inner product is the usual one in $\mathbb{R}^N$.

The point $a\in M$ is called critical, if $0\in\partial h(a)$. The notion of subdifferential can be also applied to vector-valued functions once we interpret their differentials as matrices. It also can be proved that given any local parametrization $\gamma$ of $M$ in a neighourhood of $a$, $\partial h(a)=d_{\gamma^{-1}(a) } \gamma(\partial(h\circ \gamma)(\gamma^{-1}(a)))$

Consider now a $\mathcal{C}^1$ map $g\colon M\to \mathbb{R}^n$ and a locally Lipschitz function $f\colon U\to\mathbb{R}$ where $U$ is a neighbourhood of $g(M)$. 

\begin{lemma}\label{two}
If $g$ is a submersion at $p\in M$, then $0\in \partial(f\circ g)(p)$ implies $0\in \partial f(g(p))$.
\end{lemma}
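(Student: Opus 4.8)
The plan is to unwind the definition of the Clarke subdifferential through a local parametrization and reduce the statement to the classical chain rule for Clarke subdifferentials on Euclidean space, combined with the submersion hypothesis. Fix $p\in M$, choose a local parametrization $\gamma\colon \Omega\to M$ with $\Omega\subset\mathbb{R}^m$ open and $\gamma(q)=p$ for some $q\in\Omega$. By the identity $\partial h(a)=d_{(\gamma^{-1}(a))}\gamma(\partial(h\circ\gamma)(\gamma^{-1}(a)))$ recalled just before the lemma, and since $d_q\gamma\colon\mathbb{R}^m\to T_pM$ is a linear isomorphism, the condition $0\in\partial(f\circ g)(p)$ is equivalent to $0\in\partial\big((f\circ g)\circ\gamma\big)(q)=\partial\big(f\circ(g\circ\gamma)\big)(q)$. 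So we are reduced to the Euclidean situation: $g\circ\gamma\colon\Omega\to\mathbb{R}^n$ is $\mathcal{C}^1$, $f$ is locally Lipschitz near $g(p)$, and $g$ being a submersion at $p$ means $d_q(g\circ\gamma)=d_pg\circ d_q\gamma$ is surjective onto $\mathbb{R}^n$ (this uses that $d_q\gamma$ is onto $T_pM$ and $d_pg$ is onto $\mathbb{R}^n$).

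Next I would invoke the classical Clarke chain rule (Clarke, \emph{Optimization and Nonsmooth Analysis}, Thm.~2.3.10 and the subsequent remarks on surjective inner maps): for $\phi:=g\circ\gamma$ of class $\mathcal{C}^1$ and $f$ locally Lipschitz, one has the inclusion
\[
\partial(f\circ\phi)(q)\subset (d_q\phi)^{T}\,\partial f(\phi(q)),
\]
where $(d_q\phi)^T$ is the transpose (adjoint) of the Jacobian matrix. Therefore $0\in\partial(f\circ\phi)(q)$ gives a vector $\eta\in\partial f(\phi(q))=\partial f(g(p))$ with $(d_q\phi)^T\eta=0$. Since $d_q\phi$ is surjective, $(d_q\phi)^T$ is injective, hence $\eta=0$, i.e. $0\in\partial f(g(p))$, which is exactly the conclusion.

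An alternative, more self-contained route, if one prefers not to cite the Clarke chain rule in full generality, is to establish the inclusion directly from the definition: any limit $\lim_\nu \nabla(f\circ\phi)(x_\nu)$ with $x_\nu\to q$ and $f\circ\phi$ differentiable at $x_\nu$ can be written, passing to a subsequence, as $(d_q\phi)^T\big(\lim_\nu\nabla f(\phi(x_\nu))\big)$ using continuity of $x\mapsto d_x\phi$ and the fact that differentiability of $f$ at $\phi(x_\nu)$ implies differentiability of $f\circ\phi$ at $x_\nu$ with the expected gradient; one must check that the points $\phi(x_\nu)$ can be taken among the full-measure set where $f$ is differentiable, which follows because the submersion $\phi$ maps positive-measure sets to positive-measure sets (Sard-type / coarea argument), so the preimage of the nondifferentiability null set of $f$ is null. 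Taking convex hulls then yields the displayed inclusion, and one finishes as before with the injectivity of $(d_q\phi)^T$.

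The main obstacle is the measure-theoretic point in the self-contained approach: ensuring that when we approximate a limiting gradient of $f\circ g$ we may simultaneously assume $f$ itself is differentiable at the image points, which requires that the submersion does not push the generic (differentiability) set into the bad set — this is where surjectivity of the differential is genuinely used a second time, beyond the final injectivity-of-transpose step. If instead we quote Clarke's chain rule as a black box, there is no real obstacle: the only thing to verify carefully is the bookkeeping that transferring everything through $\gamma$ is legitimate, i.e. that $d_q\gamma$ is a bijection $\mathbb{R}^m\to T_pM$ so that pre- and post-composition with it respects both the subdifferential identity and the surjectivity hypothesis.
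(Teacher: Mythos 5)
Your proposal is correct. Your primary route differs from the paper's: you transfer everything to a Euclidean chart via the identity $\partial h(a)=d_{\gamma^{-1}(a)}\gamma\bigl(\partial(h\circ\gamma)(\gamma^{-1}(a))\bigr)$, quote Clarke's chain rule $\partial(f\circ\phi)(q)\subset (d_q\phi)^{T}\partial f(\phi(q))$ for a $\mathcal{C}^1$ inner map, and conclude from the injectivity of $(d_q\phi)^T$ (transpose of a surjective map); this is clean and outsources the measure-theoretic work to a standard reference. The paper instead argues directly from the definition of the Clarke subdifferential: it writes $0$ as a convex combination of limits of gradients $d_{x_\nu^i}(f\circ g)$ at differentiability points, uses the coarea formula to ensure that the points $x_\nu^i$ can be chosen so that $f$ is differentiable at $g(x_\nu^i)$ (the preimage under a submersion of the Rademacher null set is null), factors each gradient through $d_{x_\nu^i}g$, passes to the limit, and finishes with the same linear-algebra step (an invertible submatrix of $d_pg$ forcing $\sum\lambda_i\delta_i=0$). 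Your ``alternative, self-contained route'' is essentially this argument, including the key measure-theoretic point you correctly identify as the only real obstacle; your remark that a submersion maps positive-measure sets to positive-measure sets is an equivalent way of phrasing the paper's coarea step. In short, your black-box version buys brevity at the cost of a citation, while the paper's (and your alternative) version is self-contained; both hinge on the same two ingredients, namely that the preimage of a null set under a submersion is null and that precomposition with a surjective differential is injective on covectors.
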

\begin{proof}
Assume for simplicity $g$ is everywhere a submersion. 
    By the Rademacher Theorem, $f$ is differentiable outside a zero measure set $Z\subset U$. The co-area formula (generalized Fubini Theorem) implies $S:=g^{-1}(Z)$ has measure zero, too, since $g$ is a submersion. Now, $0=\sum\lambda_i \ell_i$ is a convex combination of some $\ell_i\in \partial (f\circ g)(p)$ and each $\ell_i$ is the limit of some sequence $\ell_i^\nu:= d_{x_\nu^i}(f\circ g)$ where the points $x_\nu^i\in M\setminus S$ converge to $p$. Putting $y_\nu^i:=g(x_\nu^i)$, we get $d_{x_\nu^i}(f\circ g)=d_{y_\nu^i}f\circ d_{x_\nu^i}g$ where $d_{x_\nu^i}g\colon T_{x_\nu^i} M\to \mathbb{R}^n$ is surjective. The tangent planes converge to $T_p M$ while the corresponding differentials to $d_p g$, and $y_\nu^i$ to $g(p)$, for each $i$. Passing to a subsequence, if necessary, we may assume that $d_{y_\nu^i}f$ tend to some $\delta_i\in \partial f(g(p))$ (these sequences can be considered as elements of $\mathbb{R}^N$; the sequences are bounded since $f$ is Lipschitz). Then $0=\sum(\lambda_i\delta_i) \cdot d_pg$ after identification with matrices. In particular, $d_p g$ is an $n\times m$ matrix containing an $m\times m$ invertible submatrix $A$, whereas $\sum(\lambda_i\delta_i)$ is in fact a $1\times n$ matrix of `unknowns'. Then $0=\sum(\lambda_i\delta_i)\cdot A$ has zero as the unique solution.
\end{proof}  

\begin{lemma}\label{three}
    If $M$ is compact, $g$ is open onto its image $W=g(M)$, and $g(p)$ lies in the closure of the interior of $W$, then $0\in \partial f(g(p))$ implies $0\in \partial(f\circ g)(p)$.
\end{lemma}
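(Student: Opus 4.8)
The plan is to run the argument of Lemma \ref{two} in reverse: starting from a subgradient of $f$ at $g(p)$, I would lift the differentiability points realizing it through $g$ (using that $g$ is open onto $W$), and then push the corresponding gradients back down to $M$ to produce a subgradient of $f\circ g$ at $p$. First I would pass to a chart: by the identity $\partial h(a)=d_{\gamma^{-1}(a)}\gamma\big(\partial(h\circ\gamma)(\gamma^{-1}(a))\big)$ recorded above, and since composing $g$ with a chart keeps it $C^1$, keeps it open onto its image, and keeps $g(p)$ in the closure of the interior of that image, it suffices to treat the case where $M$ is open in $\mathbb{R}^m$ and $p=0$; put $q:=g(0)$. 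Next I would unpack the hypothesis $0\in\partial f(q)$: there are weights $\lambda_i\ge 0$ with $\sum_i\lambda_i=1$ and covectors $\delta_i=\lim_{\nu}\nabla f(y^i_\nu)$, where $y^i_\nu\to q$ are points of differentiability of $f$, such that $\sum_i\lambda_i\delta_i=0$.

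The core step is the lifting. Because $g$ is open onto $W$, for every neighbourhood $O$ of $0$ the image $g(O)$ is a relative neighbourhood of $q$ in $W$; iterating over a shrinking neighbourhood basis $O_1\supset O_2\supset\cdots$ of $0$ and diagonalizing, I would select points $x^i_\nu\to 0$ with $g(x^i_\nu)=y^i_\nu$ — provided the $y^i_\nu$ lie in $W$, which is where the difficulty sits (see below). Once this is done, the chain rule goes downward exactly as in Lemma \ref{two} read the other way: since $f$ is (Fréchet) differentiable at $g(x^i_\nu)=y^i_\nu$ and $g$ is $C^1$, the composition $f\circ g$ is differentiable at $x^i_\nu$ with $d_{x^i_\nu}(f\circ g)=d_{y^i_\nu}f\circ d_{x^i_\nu}g$; letting $\nu\to\infty$ and using continuity of the differential ($d_{x^i_\nu}g\to d_0g$) together with $\nabla f(y^i_\nu)\to\delta_i$ gives $\delta_i\circ d_0g\in\partial(f\circ g)(0)$. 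Finally, since $\partial(f\circ g)(0)$ is convex and $\sum_i\lambda_i(\delta_i\circ d_0g)=\big(\sum_i\lambda_i\delta_i\big)\circ d_0g=0$, I conclude $0\in\partial(f\circ g)(0)$, which is the claim.

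The main obstacle is precisely the proviso ``$y^i_\nu\in W$'' used in the lifting: the differentiability points witnessing $0\in\partial f(q)$ a priori live in a full $\mathbb{R}^n$-neighbourhood of $q$, whereas only points of $W=g(M)$ have $g$-preimages. When $q\in\mathrm{int}\,W$ the $y^i_\nu$ are eventually in $W$ and there is nothing to do — and this already covers the use made of the lemma in the text, namely Case (1) of Proposition \ref{p:V-sing}, where $X$ lies on no critical circle and hence $X\in\mathrm{int}\,\mathcal{W}$. In the general case ($q\in\overline{\mathrm{int}\,W}$) I would reduce to this situation by showing that $\partial f(q)$ is already generated by limits of gradients $\nabla f$ taken at differentiability points inside $\mathrm{int}\,W$; since $\mathrm{int}\,W$ is of full measure near $q$, this is a Rademacher-type statement, and it is immediate in the applications — for the Voronoi potential $f=V$ is locally a minimum of finitely many smooth functions, so $\partial V(q)$ is the convex hull of the gradients of those $\|\cdot-a_i\|^2$ that are active at $q$, each of which is plainly a limit of genuine gradients approached from within $\mathrm{int}\,W$.
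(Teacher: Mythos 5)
Your argument is essentially the paper's own proof: write $0=\sum\lambda_i\delta_i$ with $\delta_i$ limits of gradients at differentiability points $y^i_\nu\to g(p)$, lift these through $g$ to points $x^i_\nu\to p$ (you do this directly from openness via a shrinking neighbourhood basis, where the paper instead invokes the continuity of the fibers of an open map --- the same fact), apply the classical chain rule at each $x^i_\nu$, and pass to the limit using convexity of $\partial(f\circ g)(p)$. The only divergence is your (justified) concern about whether the $y^i_\nu$ actually lie in $W$: the paper's proof silently chooses preimages $x^i_\nu\in g^{-1}(y^i_\nu)$ without addressing this, and your observation that the interior case suffices for the intended application (Case (1) of Proposition \ref{p:V-sing}, where $X$ is interior to $\mathcal{W}$) is a reasonable patch, though your claim that $\mathrm{int}\,W$ has full measure near a boundary point $q$ would need justification in the general ``closure of the interior'' setting.
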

\begin{proof}
    Similarly to the previous proof, we have $0=\sum\lambda_i \ell_i$ as a convex combination of matrices $\ell_i=\lim \nabla f(y_\nu^i)$ for some $y_\nu^i\in U\setminus Z$ converging to $g(p)$. Choosing arbitrarily $x_\nu^i\in g^{-1}(y_\nu^i)$ and passing to convergent subsequences, we may assume that $x_\nu^i$ converge to some $x^i$ and thus $g(x^i)=g(p)$. This allows us to write $\nabla (f\circ g)(x_\nu^i)=\nabla f(y_\nu^i)\cdot d_{x_\nu^i}g$ and these sequences converge to $\ell_i.d_{x^i}g$. In order to avoid the problem of obtaining different points $x^i\in g^{-1}(g(p))$ we observe that $g$ being open onto its image, it has continuously varying fibres (see e.g. \cite{DL} Lemma 3.8): $g^{-1}(y_\nu^i)$ converges to $g^{-1}(p)$ in the Hausdorff metric. This implies in particular that the point $p\in g^{-1}(g(p))$ can be obtained as the limit of some points $z_\nu^i\in g^{-1}(y_\nu^i)$. Then   $\nabla (f\circ g)(x_\nu^i)$ tends to $\ell_i\cdot d_p g\in \partial (f\circ g)(p)$ and $0=\sum \lambda_i(\ell_i\cdot d_p g)$.
\end{proof}

We observe that the work map, the spider space, and the Voronoi distance satisfy the assumptions of the Lemma above. Moreover, in our situation the spider space is an algebraic manifold and the work map is analytic. Hence, both the set of critical points of $\pi$ (outside of which $\pi$ is a submersion) as well as $\pi^{-1}(\mathcal{V})$ have measure zero and so -- just as in the classical case of functions defined in the Euclidean space -- in order to determine $\partial \mathbb{V}(\xi)$ it is enough to approximate $\xi$ by sequences $\xi_\nu$ at which $d_{\xi_\nu}\pi=\pi|_{T_{\xi_\nu}\mathcal{S}}\colon T_{\xi_\nu}\mathcal{S}\to T_{\pi(\xi_\nu)}\mathbb{R}^2=\mathbb{R}^2$ has rank 2 and $X_\nu:=\pi(\xi_\nu)\notin\mathcal{V}$. In particular, $d_{\xi_\nu}\mathbb{V}=d_{x_\nu}V\circ \pi|_{T_{\xi_\nu}\mathcal{S}}$ and $d_{x_\nu}V(x)=2\langle x_\nu-a_{i_\nu}, x\rangle$ where $A_{i_\nu}$ is the unique point realizing the Voronoi distance $V(\pi(\xi_\nu))$. Passing to a subsequence, we get the same $i_\nu=i$ for all $\nu$. 

Let us stress once again that any non-differentiability point $\xi_0$ of $\mathbb{V}$ yields $X_0:=\pi(\xi_0)\in \mathcal{V}$. Eventually, any $\ell\in\partial \mathbb{V}(\xi_0)$, is a linear map $\ell\colon T_{\xi_0}\mathcal{S}\to\mathbb{R}$ of the form 
$$\ell(s)=2\langle x_0-a, \pi(s)\rangle,\> s\in T_{\xi_0}\mathcal{S},\leqno{(*)}$$ 
where $A$ is taken from the convex hull of those feet $A_1,\dots, A_n$ that realise the distance $V(x_0)$. On the other hand, any map $\ell$ of the previous form belongs to $\partial \mathbb{V}(\xi_0)$, provided $X_0$ lies in the interior of $\mathcal{W}$. Indeed, we may define $L(x)=2\langle x_0-a, x\rangle$, $x\in \mathbb{R}^2$ and we know that $L\in \partial V(x_0)$. Thus, it is the convex combination of the limits $L_i$ of some $\nabla V(x_\nu^i)$ with $x_\nu^i\to x_0$ and all $X_\nu^i\in \mathcal{W}\setminus\mathcal{V}$, because $X_0$ is an interior point of the work space. The spider space being compact and the work map open, we easily produce the corresponding required sequences $\xi_\nu^i\to \xi_0$ to finally get the assertion. 

All this implies in particular the following two remarks gathered in one obvious Lemma:
\begin{lemma}\label{concluding}
 In the situation considered above and with its notations, if $\xi_0$ is a critical point of $\mathbb{V}$, then $x_0-a$ is orthogonal to $\pi(T_{\xi_0} \mathcal{S})$, for some $A$ in the convex hull of the feet realising the distance $V(x_0)$. On the other hand, any critical point $x_0\in\mathcal{V}$ of $V$ and lying in the interior of the work space, is the image of a critical point of $\mathbb{V}$ by the work map.
\end{lemma}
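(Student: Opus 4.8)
The plan is to read off Lemma~\ref{concluding} from the analysis carried out just above it in the Appendix, so that its two assertions become, respectively, the two implications isolated in Lemmas~\ref{two} and~\ref{three}, specialized to the work map $g=\pi\colon\mathcal{S}\to\mathbb{R}^2$ and the Voronoi potential $f=V$, combined with the explicit form of $\partial V$. No new estimates are needed; the substance already sits in the measure-zero arguments (Rademacher plus the co-area formula, using that the critical locus of $\pi$ and $\pi^{-1}(\mathcal{V})$ are semialgebraic of measure zero) and in the continuously-varying-fibers property of the open map $\pi$.

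First I would treat the forward implication. Assume $\xi_0$ is a critical point of $\mathbb{V}$, i.e. $0\in\partial\mathbb{V}(\xi_0)$. One cannot invoke Lemma~\ref{two} here, since $\pi$ need not be a submersion at a critical configuration (at a two-aligned-leg configuration $d_{\xi_0}\pi$ has rank $0$); instead I use the description of $\partial\mathbb{V}(\xi_0)$ established above by approximating $\xi_0$ with configurations $\xi_\nu$ at which $d_{\xi_\nu}\pi$ has rank $2$ and $\pi(\xi_\nu)\notin\mathcal{V}$. For those, $d_{\xi_\nu}\mathbb{V}=d_{x_\nu}V\circ\pi|_{T_{\xi_\nu}\mathcal{S}}$ with $d_{x_\nu}V(x)=2\langle x_\nu-a_i,x\rangle$ for the (subsequentially constant) realizing foot $A_i$, so in the limit every $\ell\in\partial\mathbb{V}(\xi_0)$ has the form $(*)$, namely $\ell(s)=2\langle x_0-a,\pi(s)\rangle$ with $A$ in the convex hull of the feet realizing $V(x_0)$. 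Taking $\ell=0$ produces such an $A$ with $\langle x_0-a,\pi(s)\rangle=0$ for all $s\in T_{\xi_0}\mathcal{S}$, which is exactly the orthogonality of $x_0-a$ to $\pi(T_{\xi_0}\mathcal{S})$ that is claimed.

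Next I would do the converse. Let $x_0\in\mathcal{V}$ be a critical point of $V$ lying in the interior of $\mathcal{W}$, so $0\in\partial V(x_0)$. I would apply Lemma~\ref{three} with $M=\mathcal{S}$, $g=\pi$, $f=V$: the spider space $\mathcal{S}$ is compact, the work map $\pi$ is open onto its image $\mathcal{W}=\pi(\mathcal{S})$, and $x_0$, being an interior point of $\mathcal{W}$, lies in the closure of the interior of $\mathcal{W}$. Hence for every $p$ in the (nonempty, since $x_0\in\mathcal{W}$) fiber $\pi^{-1}(x_0)$ we get $0\in\partial(V\circ\pi)(p)=\partial\mathbb{V}(p)$, so every such $p$ is critical for $\mathbb{V}$ and maps to $x_0$ under $\pi$. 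In particular $x_0$ is the image of a critical point of $\mathbb{V}$, as asserted.

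The hard part is essentially absorbed into Lemmas~\ref{two} and~\ref{three} and into verifying that $\pi^{-1}(\mathcal{V})$ and the non-submersion locus of $\pi$ are negligible and that $\pi$ is open with continuously varying fibers (cf.\ \cite{DL}); for Lemma~\ref{concluding} itself only two small points of bookkeeping remain: recognizing that Lemma~\ref{two} cannot be quoted for the forward direction because its submersion hypothesis fails at critical configurations (hence the detour through $(*)$), and checking that the hypothesis ``interior of the work space'' is exactly what makes Lemma~\ref{three} applicable, since it both places $x_0$ in the closure of the interior of $\mathcal{W}$ and lets the approximating points be taken in $\mathcal{W}\setminus\mathcal{V}$.
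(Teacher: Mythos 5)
Your proposal is correct and follows essentially the same route as the paper: the paper declares the lemma an ``obvious'' consequence of the preceding discussion, namely the representation $(*)$ of every element of $\partial\mathbb{V}(\xi_0)$ for the first assertion, and the compactness/openness argument of Lemma~\ref{three} (applied to $\pi$ and $V$, with the interior hypothesis supplying the approximating points in $\mathcal{W}\setminus\mathcal{V}$) for the second. Your additional remark that Lemma~\ref{two} cannot be quoted for the forward direction because the submersion hypothesis fails at critical configurations is accurate bookkeeping, consistent with the paper's detour through $(*)$.
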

\begin{remark}\label{submersion}
According to the discussion just made, if $0\notin \partial V(x_0)$, then any $L\in \partial V(x_0)$ is a submersion. 
\end{remark}

We end with the two remaining statements in section \ref{s:voronoi}:
\subsection{Proposition 9}[Composition of regular maps]
If $\xi\in\mathcal{S}$ is a regular point of the work map $\pi$ and $V$ is regular at $X=\pi(\xi)$, then $\mathbb{V}$ is regular at $\xi$.

This is a straightforward consequence of Lemma \ref{two}.

\subsection{Case 1}  It is easy to see that $0\in \partial V(\pi(\xi))$ implies $0\in \partial \mathbb{V}(\xi)$.

Apply Lemma \ref{three}.

\end{document}